\definecolor{darkgreen}{rgb}{0,0.7,0}
\tikzset{double line with arrow/.style args={#1,#2}{decorate,decoration={markings,%
mark=at position 0 with {\coordinate (ta-base-1) at (0,1pt);
\coordinate (ta-base-2) at (0,-1pt);},
mark=at position 1 with {\draw[#1] (ta-base-1) -- (0,1pt);
\draw[#2] (ta-base-2) -- (0,-1pt);
}}}}
\definecolor{electricviolet}{rgb}{0.56, 0.0, 1.0}
\newtheorem{introthm}{Theorem}
\newtheorem{introexample}{Example}
\newtheorem{theorem}{Theorem}[section]
\newtheorem{question}{Question}
\newtheorem*{theorem*}{Theorem}
\newtheorem*{lemma*}{Lemma}
\newtheorem*{definition*}{Definition}
\newtheorem{definition}{Definition}
\newtheorem{corollary}[theorem]{Corollary}
\newtheorem*{fact*}{Fact}
\newtheorem{remark}[theorem]{Remark}
\newtheorem*{remark*}{Remark}
\newtheorem{prop}[theorem]{Proposition}
\newtheorem*{prop*}{Proposition}
\newtheorem{example}[theorem]{Example}
\newcommand{\scomp}[2]{\left\{{#1}\,\middle|\,{#2}\right\}}
\newcommand{\overbar}[1]{\mkern 1.5mu\overline{\mkern-1.5mu#1\mkern-1.5mu}\mkern 1.5mu}
\newcommand{\conj}[1]{\overbar{#1}}
\newcommand{\interior}[1]{\ring{#1}}
\newcommand{\propCS}{property (CS)\xspace}
\newcommand{\PropCS}{Property (CS)\xspace}
\newcommand{\invH}{{\hbar_D}}
\newcommand{\invS}{\hbar_S}
\begin{document}
%\keywords{one, two, three, four}
%\subjclass[2020]{53D12}

%\title{Displacement energy vs. minimal area of \texorpdfstring{$J$}{J}-holomorphic disks for Lagrangian knots}
\title{Tightness of Chekanov's bound on displacement energy for some Lagrangian knots}

\author{
   David Keren Yaar
}

%\date{\today}
\date{}
%\date{\monthyeardate\today}

\maketitle

\renewcommand{\thefootnote}{}
%\footnotetext[1]{\textit{Date:} \monthdayyeardate\today.}

% 53D12 Lagrangian submanifolds; Maslov index
% 53D20 Momentum maps; symplectic reduction
\footnotetext[1]{\textit{Mathematics Subject Classification (2020).} 53D12; 53D20.}
\footnotetext[1]{\textit{Keywords.} Displacement energy, \texorpdfstring{$J$}{J}-holomorphic disks, Lagrangian tori, Chekanov torus.}

% reset to normal footnote numbering
\renewcommand{\thefootnote}{\arabic{footnote}}

\begin{abstract}
By a classical theorem of Chekanov, the displacement energy, \texorpdfstring{$e$}{e}, of a Lagrangian submanifold is bounded from below by the minimal area of pseudo-holomorphic disks with boundary on the Lagrangian, \texorpdfstring{$\hbar$}{ℏ}. We compute \texorpdfstring{$e$}{e} and \texorpdfstring{$\hbar$}{ℏ} for displaceable Chekanov tori in \texorpdfstring{$\mathbb{C}P^n$}{ℂPⁿ}, and for an infinite family of exotic tori in \texorpdfstring{$\mathbb{C}^3$}{ℂ³} constructed by Brendel. In these families, \texorpdfstring{$e=\hbar$}{e=ℏ}.

We compare continuity properties of \texorpdfstring{$e$}{e} and \texorpdfstring{$\hbar$}{ℏ} on the space of Lagrangians. This provides an example (suggested by Fukaya, Oh, Ohta, and Ono) where \texorpdfstring{$e>\hbar$}{e>ℏ}.

Our calculations have further applications such as a new proof, inspired by work of Auroux, that Brendel's family of exotic tori consists of infinitely many distinct Lagrangians.
\end{abstract}

\tableofcontents

\newpage

\section{Introduction}

Let $(M,\omega)$ be a symplectic manifold and $L\subset M$ be a closed embedded Lagrangian submanifold. In the present note, we compare the following two fundamental notions in symplectic geometry, which quantify the size of $L$, through concrete examples.

The first notion is Hofer's displacement energy. Denote the \emph{Hofer norm} by \[\lVert H\rVert\coloneqq\int_0^1\max_MH_t-\min_MH_tdt\] where $H\colon[0,1]\times M\to\mathbb{R}$ is a normalized Hamiltonian. We denote by $\phi^H_1$ the time-one map of the Hamiltonian flow. We refer to \cite{poltGeomBook} for details.

\begin{definition}
The \emph{displacement energy of $L$} is given by \[e(L)\coloneqq\inf\scomp{\lVert H\rVert}{\phi^H_1(L)\cap L=\emptyset}.\] In case no Hamiltonian displaces $L$ from itself and $e(L)=\infty$, $L$ is called \emph{non-displaceable}.
\end{definition}

From now on, we assume that $M$ is geometrically bounded in the sense of \cite{Chekanov98}. The second notion we consider is the minimal symplectic area of a pseudo-holomorphic disk with boundary on $L$, as introduced by Chekanov in \cite{Chekanov98}.

\begin{definition}
    Denote by $\mathcal{J}_\omega$ the space of $\omega$-tame almost complex structures. Set \[\invH(L,J)\coloneqq\inf\scomp{\int_Du^*\omega}{u\colon(B^2,\partial B)\to(M,L)\text{, non-const. $J$-holo.}},\] and
    \begin{equation}\label{hbarSupDefn}
    \invH(L)\coloneqq\sup_{J\in\mathcal{J}_\omega}\invH(L,J).
    \end{equation}
    Define $\invS(M)>0$ similarly with areas of pseudo-holomorphic spheres in $M$ instead of disks. Define $\hbar(L)\coloneqq\min\{\invH(L),\invS(M)\}$.
\end{definition}

As a warm-up, we consider $e$ and $\hbar$ as functions on the space of closed Lagrangians of $M$, endowed with the $C^\infty$ topology. In Section \ref{ContinuitySection}, we prove opposed semi-continuity properties for both invariants. For instance, our calculations make use of the lower semi-continuity of $\hbar$, see Proposition \ref{hbarLSCprop}.

Chekanov's work \cite{Chekanov98} used ideas from Lagrangian intersection Floer theory to prove the following inequality, referred to as Chekanov's bound on displacement energy in the title,
\begin{equation}\label{ChekanovsIneqEqn}
    \hbar(L)\le e(L).
\end{equation}
We observe that in many elementary examples, either
\begin{itemize}
    \item $L$ is non-displaceable, in which case \eqref{ChekanovsIneqEqn} is obvious, or
    \item $\hbar(L)=e(L)$.
\end{itemize}

Examples exhibiting one of the two options include many toric fibers and the simplest Lagrangian knot\footnote{Also called an exotic torus, a Lagrangian knot is a Lagrangian torus that is not symplectomorphic to a standard torus (the product torus in $\mathbb{C}^n$ or, more generally, a regular toric fiber in a toric symplectic manifold).}, which is the well-known monotone Chekanov torus. For instance, a product torus $T(a)\coloneqq\bigtimes_iS^1(a_i)\subset\mathbb{C}^n$ has \[\hbar(T(a))=e(T(a))=\min_ia_i\eqqcolon\underline{a}.\] Here, $S^1(b)\coloneqq\partial B^2(b)$, where $B^{2n}(\pi r^2)\subset\mathbb{C}^n$ is the closed ball with radius $r>0$.
On the other hand, the Clifford torus in complex projective space $\mathbb{C}P^n$ is non-displaceable.

The following proposition gives a sufficient condition for a Lagrangian torus to have $e=\hbar$.
\begin{definition}\label{CSpropDef}
    A Lagrangian torus $L\subset M^{2n}$ has \emph{\propCS} if there is a Darboux chart $\varphi\colon B^{2n}(A)\to M$ with $L=\varphi(T(a))$ such that \[\underline{a}+\sum_ia_i<A\] and \[\underline{a}<\lambda_S(\varphi)\coloneqq\sup\scomp{\invS(M,J)}{J\in\mathcal{J}_\omega,\,J|_{\operatorname{im}\varphi}=\varphi_*J_0}.\]
\end{definition}

\begin{prop*}[{\cite[Proposition 2.1]{CS16},\cite[Proposition 1.14]{brendel2023local}}]
    If $L=\varphi(T(a))\subset M$ is a Lagrangian torus with \propCS, then \[\hbar(L)=e(L)=\underline{a}.\]
\end{prop*}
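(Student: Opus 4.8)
The plan is to sandwich the common value between the two sides of Chekanov's inequality \eqref{ChekanovsIneqEqn}, proving separately that $e(L)\le\underline a$ and that $\hbar(L)\ge\underline a$. Granting these, the chain $\underline a\le\hbar(L)\le e(L)\le\underline a$ forces all three quantities to coincide, so the only real content is the two one-sided bounds, each of which is where a distinct clause of \propCS enters.

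For the upper bound $e(L)\le\underline a$ I would work entirely inside the Darboux chart: since $\varphi$ is a symplectomorphism onto its image, any compactly supported Hamiltonian on $B^{2n}(A)$ displacing $T(a)$ transports through $\varphi$ to a Hamiltonian on $M$ displacing $L$ with the same Hofer norm, so it suffices to displace $T(a)$ inside $B^{2n}(A)$ with energy approaching $\underline a$. Let $i_0$ satisfy $a_{i_0}=\underline a$, and displace the circle $S^1(a_{i_0})$ within its own $\mathbb{C}$–factor by a Hamiltonian depending only on that coordinate. Fixing the remaining coordinates on $T(a)$ leaves room $A-\sum_{i\ne i_0}a_i=A-\sum_i a_i+\underline a>2\underline a$ in the $i_0$–th disk, the final inequality being exactly the room hypothesis $\underline a+\sum_i a_i<A$ of \propCS. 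This is enough area to push the circle enclosing area $\underline a$ off itself with Hofer norm approaching $\underline a$ — the product-torus computation recalled in the introduction — and taking the infimum gives $e(L)\le\underline a$.

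For the lower bound $\hbar(L)\ge\underline a$, recall $\hbar(L)=\min\{\invH(L),\invS(M)\}$ and treat the two factors separately. The sphere factor is immediate: since $\lambda_S(\varphi)$ is a supremum of $\invS(M,J)$ over the subclass of $J$ agreeing with $\varphi_*J_0$ on $\operatorname{im}\varphi$, it is dominated by the unrestricted supremum $\invS(M)$, so $\invS(M)\ge\lambda_S(\varphi)>\underline a$ by \propCS. For the disk factor I would exhibit a single $J$ with $\invH(L,J)\ge\underline a$, chosen equal to $\varphi_*J_0$ on $\operatorname{im}\varphi$ and, using the definition of $\lambda_S(\varphi)$ as a supremum, with $\invS(M,J)>\underline a$ as well. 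The claim is that every non-constant $J$-holomorphic disk $u$ with boundary on $L$ has area at least $\underline a$. If the image of $u$ lies in $\operatorname{im}\varphi$, then $\varphi^{-1}\circ u$ is an honest $J_0$-holomorphic disk in $\mathbb{C}^n$ with boundary on $T(a)$; each coordinate $u_i$ is then holomorphic with $|u_i|$ constant equal to $\sqrt{a_i/\pi}$ on $\partial B^2$, hence a constant multiple of a finite Blaschke product, contributing area $a_i\cdot(\deg u_i)$. Non-constancy of $u$ forces some $\deg u_i\ge1$, so $\operatorname{area}(u)\ge\min_i a_i=\underline a$.

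The remaining, and main, difficulty is the disk whose image escapes the chart. Here I would argue by monotonicity: the boundary of $u$ lies in $L\subset\varphi(B^{2n}(\sum_i a_i))$, while escaping means the $\varphi$-preimage of $u\cap\operatorname{im}\varphi$ reaches $\partial B^{2n}(A)$, so this $J_0$-holomorphic piece must cross the radial collar $\{\sum_i a_i\le\pi|z|^2\le A\}$, whose symplectic width $A-\sum_i a_i$ exceeds $\underline a$ by the room hypothesis. Using subharmonicity of $\pi|z|^2$ along $J_0$-holomorphic curves (equivalently, the monotonicity lemma in the ball), crossing this collar forces $\operatorname{area}(u)\ge\underline a$. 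A Gromov compactness argument controls the infimum: any limiting stable map has total area below $\underline a<\invS(M,J)$, hence carries no sphere bubbles, and each surviving disk component obeys the dichotomy above. I expect the bookkeeping of the escaping case — pinning down the exact monotonicity constant for a curve crossing the collar, and checking that the two clauses of \propCS are used precisely (the room condition to produce a collar of width exceeding $\underline a$, the sphere condition to suppress bubbling) — to be the technical heart of the argument. Combining the two bounds with \eqref{ChekanovsIneqEqn} then yields $\hbar(L)=e(L)=\underline a$.
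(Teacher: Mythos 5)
Your proposal is correct and is essentially the Chekanov--Schlenk/Brendel argument that the paper cites for this proposition: the room condition $\underline{a}+\sum_ia_i<A$ is spent on an explicit displacement of one circle factor inside the chart, giving $e(L)\le\underline{a}$, while the lower bound $\hbar(L)\ge\underline{a}$ combines the Blaschke-product computation $\invH(T(a),J_0)=\underline{a}$ with the shell-crossing estimate for disks escaping the chart (this is precisely the content of Proposition~\ref{csLagEmbeddinvHLem}) and the sphere clause of \propCS. The paper does not reprove the statement, but your two halves are exactly the templates it deploys in Propositions~\ref{displaceSmallChekanovToriProp} and~\ref{eAndHofChekanovProjProp}; the only superfluous step is the closing appeal to Gromov compactness, since the pointwise area bound on every non-constant $J$-holomorphic disk already controls the infimum defining $\invH(L,J)$.
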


In light of Chekanov's bound \eqref{ChekanovsIneqEqn} and the proposition above, it is natural to ask the following question.
\begin{question}\label{DoesExistLQst}
    Is there a closed Lagrangian $L$ such that \[\hbar(L)<e(L)<\infty?\]
\end{question}

\PropCS was interpreted in \cite[Remark 1.20]{brendel2023local} as characterizing \emph{standard tori} in (not necessarily toric) symplectic manifolds. Thus, exotic Lagrangian tori are natural candidates to examine in relation to Question \ref{DoesExistLQst}.

\paragraph{The zoo of Lagrangian tori.}

Both families of Lagrangians we consider are based on the Chekanov torus. Thus, we start with a brief review of the construction of Chekanov torus in $\mathbb{C}^2$ via symplectic reduction, see also \cite{knot-survey,chekanov2010notes,joeIntroChekanovTorus}.

Recall the standard toric moment map $\mu(z_1,z_2)=(\pi\lvert z_1\rvert^2,\pi\lvert z_2\rvert^2)$ of $\mathbb{C}^2$, having $(\mathbb{R}_{\ge0})^2$ as its image. The autonomous Hamiltonian given by $\mathcal{C}\coloneqq\mu_1-\mu_2$, generates the $1$-periodic Hamiltonian flow \[t\in\mathbb{R},\quad(z_1,z_2)\mapsto(e^{2\pi it}z_1,e^{-2\pi it}z_2).\] We obtain a Hamiltonian $S^1$-action on $\mathbb{C}^2$ having the origin as its single fixed point. The level set \[\mathcal{C}^{-1}(0)=\{\lvert z_1\rvert=\lvert z_2\rvert\}\] is invariant under this $S^1$-action. After removing the origin and taking the quotient of this action, we obtain a reduced space symplectomorphic to the standard punctured plane $\mathbb{C}^*\coloneqq\mathbb{C}\setminus\{0\}$. Let $a>0$. While the product torus $T(a,a)\subset\{\lvert z_1\rvert=\lvert z_2\rvert\neq0\}$ projects to the circle $S^1(a)\subset\mathbb{C}^*$, the Chekanov torus $T^2_\text{Ch}(a)\subset\mathbb{C}^2$ is obtained by lifting a \emph{null-homotopic} curve bounding area $a$ in $\mathbb{C}^*$. The Chekanov torus is a monotone Lagrangian knot that first appeared in \cite{chekanovTorus}.

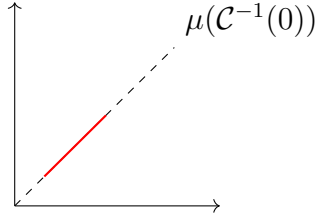
\begin{figure}[h]
\centering
\begin{tikzpicture}[x=0.3cm, y=0.3cm]
  % Axes
  \draw[->] (0,0) -- (9,0);
  \draw[->] (0,0) -- (0,9);

  % Dashed diagonal line with label
  \draw[dashed] (0,0) -- (7,7) node[above right] {$\mu(\mathcal{C}^{-1}(0))$};

  % Red thick segment
  \draw[red, thick] (1.3,1.3) -- (4,4);
\end{tikzpicture}
\caption{The \text{\color{red}segment} is the image of $T^2_\text{Ch}$ under $\mu$ in $(\mathbb{R}_{\ge0})^2$.}
\end{figure}

\paragraph{Chekanov tori in \texorpdfstring{$\mathbb{C}P^n$}{ℂPⁿ}.}

Let $n\ge2$. The Chekanov torus has a straightforward generalization to higher dimensions, given for example in \cite{brendel2020reallagrangiantoriversal}. Denote the standard toric moment map by $\mu\colon\mathbb{C}^n\to(\mathbb{R}_{\ge0})^n$. Then the moment map
\begin{equation}\label{chekanovMomentMapEqn}
    \mathcal{C}=(\mu_1-\mu_2,\dotsc,\mu_1-\mu_n)\colon\mathbb{C}^n\to\mathbb{R}^{n-1}
\end{equation}
generates a $T^{n-1}$-action on $\mathbb{C}^n$. Again, performing symplectic reduction on $\mathcal{C}^{-1}(\vec{0})\setminus\{0\}$, gives a punctured plane. Lift a \emph{contractible} curve enclosing area $a>0$ in the punctured plane, to obtain a monotone Lagrangian knot $T^n_\text{Ch}(a)\subset\mathbb{C}^n$.

Chekanov tori can be embedded into closed symplectic manifolds by Darboux charts. In favorable situations, this gives rise to a family, parameterized by small enough $a$, of Lagrangian knots, only one of which is monotone. For instance, Chekanov tori in $S^2\times S^2$ are a primary subject of interest. They appear in a one-parameter family consisting of
    \begin{itemize}
        \item a non-displaceable monotone member \cite{chekanov2010notes,Oakley_2016},
        \item a one-parameter subfamily of non-displaceable non-monotone members \cite{fukaya2010toricdegenerationnondisplaceablelagrangian}, and
        \item another one-parameter subfamily of \emph{displaceable} non-monotone members \cite{lou2024lagrangiantoris2timess2}.
    \end{itemize}
    \begin{figure}[h]
\centering
     \begin{tikzpicture}%[x=1.2cm,y=0.8cm]
  % Red line segment between the two points
  \draw[red, line width=1pt] (4/3,2) -- (3,2);
  \draw[blue, line width=1pt] (3,2) -- (14/3,2);
  
  % Points
  \fill[red] (3,2) circle (2pt);
\end{tikzpicture}\captionsetup{width=0.8\linewidth}
    \caption{A one-parameter family consisting of \text{\color{red}non-displaceable} members on the left and \text{\color{blue}displaceable} members on the right. The \text{\color{red}dot} corresponds to the monotone member.}
\end{figure}

For $a<\frac{\pi}{n}$, one can fit $T^n_\text{Ch}(a)$ into $B^{2n}(\pi)$. We work with the Fubini-Study symplectic form $\omega_\text{FS}$ on $\mathbb{C}P^n$, scaled such that $\int_{\mathbb{C}P^1}\omega_\text{FS}=\pi$. Let $\varphi\colon B^{2n}(\pi)\to\mathbb{C}P^n$ be the Darboux chart to the complement of a hyperplane in the complex projective space. Define \[T^n_\text{PrCh}(a)\coloneqq\varphi(T^n_\text{Ch}(a))\subset\mathbb{C}P^n.\] The Lagrangian $T^n_\text{PrCh}(a)$ is monotone only for $a=\frac{\pi}{n+1}$. We show that it is displaceable for $0<a<\frac{\pi}{n+1}$, see Proposition \ref{displaceSmallChekanovToriProp}. We call the displaceable ones \emph{small Chekanov tori}.

Note that monotone Chekanov tori in a broader class of compact toric symplectic manifolds are studied in \cite{brendel2020reallagrangiantoriversal}.

\paragraph{Brendel tori in \texorpdfstring{$\mathbb{C}^3$}{ℂ³}.}

In \cite{brendel2023local}, Brendel defines the moment map on $\mathbb{C}^3$, for $k\ge2$, \[\nu_k\coloneqq(\mu_1+k\mu_3,\mu_2-\mu_3).\] After removing points with a non-trivial stabilizer from the level set $\nu_k^{-1}(\pi,0)$, taking the quotient by the induced $T^2$-action \eqref{brendelT2actionEqn} produces a singular reduced space symplectomorphic to a punctured open disk.

This construction cleverly uses the one extra dimension (compared to $\mathbb{C}^2$) in the standard toric polytope of $\mathbb{C}^3$ to \emph{invert} the ray where the Chekanov torus would be contained. This results in a \emph{bounded} singular reduced space, symplectomorphic to the punctured open ball $B^*\left(\frac{\pi}{k}\right)\coloneqq B^2\left(\frac{\pi}{k}\right)\setminus\{0\}\subset\mathbb{C}$. The proof is in Appendix \ref{UpsilonAppendix}. Lift a contractible curve enclosing area $\frac{\pi}{k+1}\le a<\frac{\pi}{k}$, to obtain a Lagrangian torus
\[\Upsilon_k(a)\subset\mathbb{C}^3.\]
\begin{figure}[h]
\centering
\begin{tikzpicture}[x=0.5cm,y=0.5cm,z=0.3cm,>=stealth]
% The axes
\draw[->] (xyz cs:x=0) -- (xyz cs:x=10.5) node[above] {$x_1$};
\draw[->] (xyz cs:y=0) -- (xyz cs:y=7.5) node[right] {$x_3$};
\draw[->] (xyz cs:z=0) -- (xyz cs:z=7.5) node[below right] {$x_2$};

 % 3D line from (x1, y1, z1) to (x2, y2, z2)
\draw[dashed] (xyz cs:x=9, y=0, z=0) -- (xyz cs:x=0, y=3, z=3) 
    node[above, xshift=7] {$\mu(\nu_k^{-1}(\pi,0))$};

\fill (xyz cs:x=9, y=0, z=0) circle (1pt);

\fill (xyz cs:x=0, y=3, z=3) circle (1pt);

\draw[red, thick] (xyz cs:x=6.3, y=0.9, z=0.9) -- (xyz cs:x=1.5, y=2.5, z=2.5);
\end{tikzpicture}
\caption{The \text{\color{red}segment} is the image of $\Upsilon_k$ under $\mu$ in $(\mathbb{R}_{\ge0})^3$.}
\end{figure}
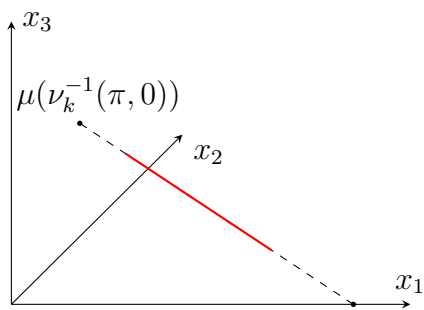

Brendel proved that $\Upsilon_k$ is not symplectomorphic to $\Upsilon_{k'}$ for $k\neq k'$, yielding infinitely many exotic tori in $\mathbb{C}^3$. A detailed discussion on the relation of these tori to the work of Auroux \cite{Auroux_monTorR6} is given in Section \ref{AurouxSection}.

\begin{introthm}\label{eeqhTHM}
    Let $L$ be either one of the Lagrangians
    \begin{enumerate}
        \item a small Chekanov torus, i.e., $T^n_\text{PrCh}(a)\subset\mathbb{C}P^n$ for some $0<a<\frac{\pi}{n+1}$ and $n\ge2$,
        \item a monotone Brendel torus, i.e., $\Upsilon_k\left(\frac{\pi}{k+1}\right)\subset\mathbb{C}^3$ for some $k\ge2$.
    \end{enumerate}

    Then $e(L)=\hbar(L)$.
\end{introthm}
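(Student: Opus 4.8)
The plan is to leverage Chekanov's bound \eqref{ChekanovsIneqEqn}, $\hbar(L)\le e(L)$, to reduce the theorem to the single reverse inequality: it suffices to produce one number $c$ with $e(L)\le c\le\hbar(L)$, since then $c\le\hbar(L)\le e(L)\le c$ forces $e(L)=\hbar(L)=c$. Both bounds are to be read off from the symplectic-reduction description of $L$. In each case $L$ is the preimage, under the reduction projection $\pi$ on the relevant level set, of a contractible loop $\gamma$ bounding area $a$ in a two-dimensional reduced space: the punctured plane $\mathbb{C}^*$ for the small Chekanov tori, and the punctured ball $B^*(\pi/k)$ for the Brendel tori. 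The candidate value $c$ is the area of the minimal pseudo-holomorphic disk, which I expect to equal the area $a$ enclosed by $\gamma$ for the small Chekanov tori and the monotonicity area for the monotone Brendel tori; pinning down this value is part of the disk analysis below.

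For the upper bound $e(L)\le c$, I would make the displaceability of Proposition \ref{displaceSmallChekanovToriProp} quantitative. Taking a torus-invariant Hamiltonian on $M$ whose reduced flow slides $\gamma$ off itself, invariance guarantees that its lift displaces $L=\pi^{-1}(\gamma)$, and the Hofer norm of the lift matches that of the reduced Hamiltonian up to reduction bookkeeping. Because $\gamma$ is contractible and bounds area $a$ away from the puncture, and because the smallness hypotheses ($a<\frac{\pi}{n+1}$, respectively $a=\frac{\pi}{k+1}<\frac{\pi}{k}$) leave room to carry out the motion, this can be arranged with Hofer norm $c$. For the Brendel tori the reduced ball is too small to translate $\gamma$ off itself inside the level set, so here I would instead allow the displacement to move off the level set within the ambient $\mathbb{C}^3$, realizing it at the same energy cost.

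For the lower bound $c\le\hbar(L)$, recall $\hbar(L)=\min\{\invH(L),\invS(M)\}$ with $\invH(L)=\sup_{J}\invH(L,J)$ by \eqref{hbarSupDefn}, so it is enough to exhibit a single tame $J$ for which every non-constant $J$-holomorphic disk with boundary on $L$ has area at least $c$. I would take $J$ to be the push-forward of the standard integrable structure under the Darboux chart and study disks through the reduction: such a disk projects to a holomorphic curve in the reduced space with boundary on $\gamma$, whose symplectic area is bounded below by the enclosed area $c$. The sphere term is harmless: for $\mathbb{C}^3$ one has $\pi_2=0$, hence $\invS=\infty$; for $\mathbb{C}P^n$ one has $\invS(\mathbb{C}P^n)=\pi$ (the line class), and $c\le\pi$ since $a$ is small. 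Lower semi-continuity of $\hbar$ (Proposition \ref{hbarLSCprop}) enters as a complementary tool, bounding $\hbar(L)$ from above by the values of $\hbar$ on nearby, more tractable tori and thereby giving an independent check that the common value is exactly $c$.

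The main obstacle is the disk classification underlying the lower bound. Holomorphic disks need not lie in the level set, so the projection-and-area argument must be made rigorous, and one must rule out any disk of area below $c$ for the chosen $J$, including disks that exploit the inverted ray and the singular reduced fiber in Brendel's $\mathbb{C}^3$ construction. Controlling the behavior of the moduli space near that singular fiber, and confirming that the naive projection correctly accounts for the area of every disk class, is where the real work lies; the displacement estimate and the sphere bounds, by contrast, I expect to be comparatively routine once the reduction picture is set up.
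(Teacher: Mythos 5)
Your reduction to the single chain $e(L)\le c\le\hbar(L)$ is exactly the paper's strategy, and your candidate values of $c$ are the right ones ($a$ and $\tfrac{\pi}{k+1}$). But both halves of your argument have gaps at precisely the points where the paper's mechanisms differ from yours. For the upper bound, a torus-invariant Hamiltonian whose reduced flow displaces $\gamma$ cannot achieve energy $a$ in the stated range: displacing a curve bounding area $a$ at cost close to $a$ needs room of area roughly $2a$ in the reduced space, and for $T^n_\text{PrCh}(a)$ the lift of that room to the level set occupies $\mu_1+\dots+\mu_n\le 2na$, which no longer fits in $B^{2n}(\pi)$ once $a\ge\tfrac{\pi}{2n}$ --- well inside the range $a<\tfrac{\pi}{n+1}$ of the theorem (and for $\Upsilon_k$ the paper notes the curve is outright non-displaceable in $B^*\left(\tfrac{\pi}{k}\right)$). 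Propositions \ref{displaceSmallChekanovToriProp} and \ref{dispEnergyUpsilonProp} instead choose $\Gamma\subset B(a+\epsilon)$ so that the lift is \emph{narrow} --- $T^n_\text{Ch}(a)\subset B^2(a+\epsilon)^n$, resp.\ $\Upsilon_k\subset\mathbb{C}^2\times B(a+\epsilon)$ --- and then displace a single factor by a Hamiltonian that is \emph{not} torus-invariant; this is where the sharp constant comes from. You gesture at "moving off the level set" for the Brendel case, but the polydisk containment is the missing concrete step, and it is equally needed in the $\mathbb{C}P^n$ case where you do not raise the issue.

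For the lower bound, the claim that disks project to holomorphic curves in the reduced space fails because $J$-holomorphic disks with boundary on $L$ need not lie in the moment-map level set; you flag this but do not repair it, and it is the crux. The paper avoids it entirely. For the monotone $\Upsilon_k$ it needs only Proposition \ref{BrendelRelativeHomologyProp}, which gives $\omega(H_2(\mathbb{C}^3,\Upsilon_k))=\mathbb{Z}\left[\tfrac{\pi}{k+1}\right]$, so every non-constant disk for \emph{every} tame $J$ has area at least $\tfrac{\pi}{k+1}$ by positivity of area --- no disk classification required. For $T^n_\text{PrCh}(a)$ it imports Auroux's classification of $J_0$-disks on $T^n_\text{Ch}\subset\mathbb{C}^n$ (Proposition \ref{chekanovMultiDimDisksProp}, giving $\invH(T^n_\text{Ch}(a),J_0)=a$ by monotonicity) and transfers it to $\mathbb{C}P^n$ via the Chekanov--Schlenk lemma (Proposition \ref{csLagEmbeddinvHLem}), which in particular controls disks that escape the Darboux chart --- something your pushed-forward $J$, defined only on the chart's image, does not address. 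Your sphere estimates and the use of Proposition \ref{hbarLSCprop} as a consistency check are fine but do not close either gap.
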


The proofs of both parts are found in Sections \ref{ChekanovSection}, \ref{brendelToriSection}, respectively. Among the methods used to calculate $e$ and $\hbar$ for these examples, the following two are especially noteworthy.

An upper bound on $e(T^n_\text{PrCh})$ and $e(\Upsilon_k)$ is achieved as follows. In both families, the tori come from lifts of curves in a $2$-dimensional reduced space. This curve is generally non-displaceable. We find that a certain optimized choice of the curve gives lifts which are narrow in their respective ambient space. This has a nice interpretation in the toric moment polytope, see Propositions \ref{displaceSmallChekanovToriProp}, \ref{dispEnergyUpsilonProp}.

The second method we mention here gives a lower bound on $\hbar(T^n_\text{PrCh})$. This is achieved by combining analysis of pseudo-holomorphic disks in a simpler case performed by Auroux with a lemma by Chekanov and Schlenk that shows that pseudo-holomorphic disks persist in large enough Darboux charts, see Proposition \ref{eAndHofChekanovProjProp}.

Brendel tori include non-monotone members for which we determine $\hbar$ (this is a corollary of $\hbar$'s lower semi-continuity), and bound $e$ from above, leaving its precise value an open problem.

\begin{question}\label{dispEOfnonMonUpsQst}
    Are $e$ and $\hbar$ equal for non-monotone Brendel tori?
\end{question}

To our knowledge, the only example answering Question \ref{DoesExistLQst} positively appears in two essentially equivalent forms in \cite[Theorem 1.1; Theorem 1.3; Theorem 1.4]{fukaya2011displacementpolydiskslagrangianfloer} and \cite[Example 5.6]{fukCompactToricFibers10}.
\begin{introexample}[$\sim$ Example \ref{hLTeFntExamp}]\label{fukCntrExmp}
Let $S^1_\text{eq}\subset S^2(2a)$ denote the equator and let $a<A$. Set \[L_\text{FOOO}\coloneqq S^1(A)\times S^1_\text{eq}\subset\mathbb{C}\times S^2(2a).\] A result of Lagrangian intersection Floer theory that appears in the two papers mentioned above is that $A=e(L_\text{FOOO})$ (the lower bound is the difficult part). Note that $L_\text{FOOO}$ is a non-monotone toric fiber.    
\end{introexample}

\begin{introthm}
    $\hbar(L_\text{FOOO})=a$
\end{introthm}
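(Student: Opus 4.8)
The plan is to compute the two ingredients of $\hbar(L_\text{FOOO})=\min\{\invH(L_\text{FOOO}),\invS(M)\}$ for $M=\mathbb{C}\times S^2(2a)$ separately. The sphere term only enters through a lower bound: since $H_2(M)\cong\mathbb{Z}[S^2]$ is generated by the fibre class, every nonconstant $J$-holomorphic sphere represents $d[S^2]$ with $d\ge1$ by positivity of area, so has area $\ge2a$; hence $\invS(M,J)\ge2a$ for every tame $J$, whence $\invS(M)\ge2a$. Moreover the sphere term is irrelevant to the upper bound, since $\hbar(L_\text{FOOO})\le\invH(L_\text{FOOO})$ by definition. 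Everything therefore reduces to the claim $\invH(L_\text{FOOO})=a$, after which $\hbar(L_\text{FOOO})=\min\{a,2a\}=a$.

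For $\invH(L_\text{FOOO})\ge a$ I would test the split structure $J_0=J_\mathbb{C}\oplus J_{S^2}$. A $J_0$-holomorphic disk bounded by $L_\text{FOOO}=S^1(A)\times S^1_\text{eq}$ splits as a pair $(u_1,u_2)$ of disks with boundaries on $S^1(A)\subset\mathbb{C}$ and on $S^1_\text{eq}\subset S^2(2a)$; a nonconstant $u_1$ has area at least $A$, while a nonconstant $u_2$ has area at least $a$ (a hemisphere being the smallest disk bounded by the equator). As $a<A$, the minimal nonconstant $J_0$-disk is a hemisphere of area $a$, giving $\invH(L_\text{FOOO},J_0)=a$ and hence $\invH(L_\text{FOOO})\ge a$. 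Combined with $\invS(M)\ge2a$, this yields $\hbar(L_\text{FOOO})\ge a$.

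The upper bound $\invH(L_\text{FOOO})\le a$ is the crux: for \emph{every} tame $J$ I must produce a nonconstant $J$-holomorphic disk with boundary on $L_\text{FOOO}$ of area $\le a$. I would focus on the class $\beta_+\in H_2(M,L_\text{FOOO})$ of a hemisphere $\{pt\}\times D_+$, of Maslov index $2$ and minimal area $a$, and study $J$-holomorphic disks in this class through a fixed boundary point. For $J_0$ this moduli space is a single transverse point (the hemisphere), and the goal is to show that such a disk survives for all $J$, so that $\invH(L_\text{FOOO},J)\le a$; a Gromov-compactness limit would then pass the bound from generic to arbitrary $J$, the limiting stable disk having total area $a$ and, since spheres have area $\ge2a$, carrying a nonconstant disk component of area $\le a$.

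I expect the main obstacle to be the compactness of this moduli space, which is where the non-monotonicity of $L_\text{FOOO}$ enters. Sphere bubbling is ruled out by area, but a breaking $\beta_+=\alpha_1+\alpha_2$ into a Maslov-$2$ disk and a Maslov-$0$ disk is a priori possible, because $L_\text{FOOO}$ carries Maslov-$0$ relative classes of positive area (the smallest, $\gamma-\beta_-$ where $\gamma$ is the class of the disk bounded by $S^1(A)$, has area $A-a$, which is smaller than $a$ exactly when $A<2a$). For the split structure such bubbles are excluded by positivity of intersection with the holomorphic divisors $\mathbb{C}\times\{p_\pm\}$ over the two poles of $S^2(2a)$, but for a general $J$ these divisors need not be holomorphic, so positivity of intersection is unavailable and the degeneration analysis is genuinely delicate. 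Resolving this — for instance by showing that any such degeneration already exhibits a nonconstant disk of area $\le a$ (its Maslov-$0$ bubble having area $a-\omega(\alpha_1)<a$), so that a small disk persists regardless — is the heart of the proof; once $\invH(L_\text{FOOO})\le a$ is established, the two bounds give $\invH(L_\text{FOOO})=a$ and therefore $\hbar(L_\text{FOOO})=a$.
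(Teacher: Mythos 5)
Your lower bound is fine and agrees with the easy half of the paper's argument: the split $J_0$ gives $\invH(L_\text{FOOO},J_0)=a$, hence $\invH(L_\text{FOOO})\ge a$ since $\invH$ is a supremum over $J$, and $\omega(H_2(M))=\mathbb{Z}[2a]$ disposes of spheres. The gap is in the upper bound, which you correctly identify as the crux but do not prove. Your plan is to show that a disk in the hemisphere class $\beta_+$ ``survives for all $J$'' via a continuation/cobordism over a path $J_s$ from $J_0$ to the target $J$, handling loss of compactness by noting that a splitting $\beta_+=\alpha_1+\alpha_2$ with $\mu(\alpha_2)=0$ and $\omega(\alpha_2)>0$ already contains a nonconstant disk of area $<a$. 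But such a degeneration occurs at some intermediate $J_{s^*}$, so it yields a small disk for $J_{s^*}$ and says nothing about the endpoint $J$; once the cobordism breaks you lose all control of the target structure, and the set $\scomp{s}{\invH(L_\text{FOOO},J_s)\le a}$ is closed by Gromov compactness but not obviously open, so the argument does not close. Worse, for this Lagrangian the count in class $\beta_+$ should genuinely fail to be invariant: the availability of Maslov-$0$ classes of area $A-a<a$ is exactly the non-monotonicity that makes $e(L_\text{FOOO})=A>a$ possible, so one cannot expect the $\beta_+$ disk to persist for every $J$.

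The paper sidesteps all of this by deforming the Lagrangian rather than the almost complex structure. Pushing the equatorial factor slightly off the equator produces nearby toric fibers $L_\delta$ that are displaceable with energy $a-|\delta|$, so Chekanov's inequality \eqref{ChekanovsIneqEqn} gives $\hbar(L_\delta)\le a-|\delta|$ with no moduli-space analysis at all; lower semi-continuity of $\hbar$ (Proposition \ref{hbarLSCprop}, i.e., Gromov compactness combined with Fukaya's trick to handle the moving boundary condition) then transfers the bound $\hbar(L_\text{FOOO})\le\lim_{\delta\to0}\hbar(L_\delta)=a$. If you want to repair your direct approach, you need an argument that lives at the fixed target $J$; running Gromov compactness on a sequence of small-area disks with boundaries on $L_\delta\to L_\text{FOOO}$ is precisely what Proposition \ref{hbarLSCprop} packages, and is the missing ingredient in your sketch.
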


This is an immediate corollary of $\hbar$'s lower semi-continuity. The proof of this was, to our knowledge, previously lacking in the literature (the upper bound is the non-trivial part). Thus, \[\hbar(L_\text{FOOO})<e(L_\text{FOOO})<\infty.\]

This leads to the following variants of Question \ref{DoesExistLQst}, which remain open. 

\begin{question}
    Is there a \emph{monotone} displaceable closed Lagrangian $L$ such that \[\hbar(L)<e(L)?\]
\end{question}

\begin{question}
    In which symplectic manifolds is there a displaceable closed Lagrangian $L$ such that \[\hbar(L)<e(L)?\]
\end{question}

\paragraph{Other results.}
Brendel tori $\Upsilon_k$ form infinitely many exotic tori in $\mathbb{C}^3$. The distinguishing invariant in Brendel's proof came from the \emph{displacement energy germ}. The existence of an infinite family in $\mathbb{C}^3$ of monotone Lagrangian tori, pairwise not symplectomorphic, was established earlier by Auroux \cite{Auroux_monTorR6}. The distinguishing invariant in Auroux's proof was \emph{pseudo-holomorphic disk count}. %, see Subsection \ref{AurouxSection}.

In the process of computing $\hbar(\Upsilon_k)$ we determine which relative homology classes $H_2(\mathbb{C}^3,\Upsilon_k)$ contain holomorphic disks\footnote{This was possibly done already in unpublished work by Tonkonog, Vianna, and Hicks, see \cite[Remark 1.2]{brendel2023local}.}, without assuming monotonicity. This is done using positivity of intersections. First, this gives a new proof (modulo an argument for the regularity of the standard almost complex structure) that the monotone members of the family of tori $\Upsilon_k$ are exotic, since their pseudo-holomorphic disk count depends on $k$. Second, this gives some evidence for a correspondence between the $\Upsilon_k$s and the family of Lagrangian tori constructed in \cite{Auroux_monTorR6}, by which we mean that each torus in Auroux's family is possibly Hamiltonian isotopic to some monotone $\Upsilon_k$. A third corollary is a classification up to symplectomorphism of Brendel tori. Further discussion and proofs are found in Section \ref{holoDiskUpsilonSect}.

\paragraph{Notations.}

We write $\mu_L\colon H_2(M,L)\to\mathbb{Z}$ for the \emph{Maslov class} and $\omega_L\colon H_2(M,L)\to\mathbb{R}$ for the \emph{symplectic area class} of $L$. We omit the subscript $L$ from both of these ``classical'' invariants when it is clear from context.

\section{Continuity properties of \texorpdfstring{$e$}{e} and \texorpdfstring{$\hbar$}{ℏ}}\label{ContinuitySection}

In the present section, we consider $e$ and $\hbar$ as functions on the space of closed Lagrangians of $M$, endowed with the $C^\infty$ topology.

We learned the following fact from the proof of \cite[Proposition 5.11]{joeIntroChekanovTorus}.
\begin{prop}[$e$ is u.s.c]\label{eUSCProp}
    As a function on the space of compact Lagrangians, $e$ is \emph{upper semi-continuous}.
\end{prop}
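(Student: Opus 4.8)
The plan is to prove the equivalent statement that if $L_n\to L$ in the $C^\infty$ topology, then $\limsup_n e(L_n)\le e(L)$. If $e(L)=\infty$ there is nothing to prove, so I would assume $e(L)<\infty$ and fix $\varepsilon>0$. Since $e(L)$ is defined as an infimum, I would first select a single normalized Hamiltonian $H\colon[0,1]\times M\to\mathbb{R}$ that displaces $L$ nearly optimally, i.e.\ with $\phi^H_1(L)\cap L=\emptyset$ and $\lVert H\rVert<e(L)+\varepsilon$. The crucial observation is that this $H$ does \emph{not} depend on $L$: it is merely a function on $[0,1]\times M$, so $\phi^H_1$ is a fixed symplectomorphism of $M$, and I may apply it to every nearby Lagrangian.

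Next I would argue that the disjointness condition is stable under small perturbations of the Lagrangian. Since $L$ and $\phi^H_1(L)$ are compact and disjoint, they are separated by some distance $\delta>0$ in a fixed background metric on $M$. If $L'$ is $C^0$-close to $L$ as a subset (in Hausdorff distance), then $L'$ lies within $\delta/2$ of $L$; and because $\phi^H_1$ is a fixed diffeomorphism, hence uniformly continuous on a compact neighborhood of $L$, the image $\phi^H_1(L')$ lies within $\delta/2$ of $\phi^H_1(L)$ once $L'$ is close enough. Consequently $\phi^H_1(L')\cap L'=\emptyset$, so the \emph{same} $H$ displaces $L'$ and therefore $e(L')\le\lVert H\rVert<e(L)+\varepsilon$.

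To finish, I would note that $C^\infty$-convergence $L_n\to L$ implies $C^0$-convergence of the images (the $C^\infty$ topology is finer than the $C^0$ topology), so all but finitely many $L_n$ lie in the neighborhood produced above; hence $e(L_n)<e(L)+\varepsilon$ for large $n$, giving $\limsup_n e(L_n)\le e(L)+\varepsilon$, and letting $\varepsilon\to0$ yields the claim.

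The argument is essentially soft, and I do not expect a serious analytic obstacle. The one point requiring care is the interplay between the topology on the space of Lagrangians and the openness of the non-intersection condition: I must ensure that a $C^0$-neighborhood of $L$ (on which a fixed $H$ still displaces) genuinely contains a $C^\infty$-neighborhood, and that the phrase ``near-optimal displacing Hamiltonian for $L$'' can be held fixed while only the Lagrangian varies. Both reduce to the fact that $e(L)$ is an infimum over Hamiltonians in which $L$ enters only through the compact, perturbation-stable condition $\phi^H_1(L)\cap L=\emptyset$.
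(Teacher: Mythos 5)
Your proof is correct and follows essentially the same route as the paper: fix a single near-optimal displacing Hamiltonian for $L$ and observe that, since disjointness of compact sets is an open condition, the same Hamiltonian displaces every Lagrangian contained in a sufficiently small neighborhood of $L$ (the paper phrases this neighborhood as a Weinstein neighborhood, but the content is identical).
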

\begin{proof}
    A Hamiltonian diffeomorphism that displaces a Lagrangian $L$, also displaces a Weinstein neighborhood of $L$, and in particular all the Lagrangians contained in that neighborhood.
\end{proof}

The following proposition can be interpreted as \emph{lower semi-continuity} of $\hbar$.

\begin{prop}[$\hbar$ is l.s.c]\label{hbarLSCprop}
    Assume $M$ is rational, meaning that $\omega(H_2(M))\subset\mathbb{R}$ is discrete. Denote by $S$ its positive generator, or $\infty$ if this group is zero. Let $L\subset M$ be a closed Lagrangian and take some $C^\infty$-converging sequence of Lagrangians $L_n\to L$. Assume that $\hbar(L_n)$ converges to a number $H<S$. Then $\hbar(L)\le H$.
\end{prop}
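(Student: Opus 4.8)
The plan is to prove the stronger statement $\invH(L)\le H$; since $\hbar(L)=\min\{\invH(L),\invS(M)\}\le\invH(L)$, this suffices. First observe that every non-constant $J$-holomorphic sphere has symplectic area $\omega([u])$ lying in the discrete group $\omega(H_2(M))$ and hence at least $S$, so $\invS(M,J)\ge S$ for every $J$ and therefore $\invS(M)\ge S>H$. Consequently, for all large $n$ the value $\hbar(L_n)=\min\{\invH(L_n),\invS(M)\}$ cannot be realized by the second term (which is $\ge S>\hbar(L_n)$), so $\invH(L_n)=\hbar(L_n)\to H$.

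To bound $\invH(L)=\sup_J\invH(L,J)$ I would fix an arbitrary $J\in\mathcal{J}_\omega$ and aim to produce a non-constant $J$-holomorphic disk with boundary on $L$ of area at most $H$. For each large $n$ we have $\invH(L_n,J)\le\invH(L_n)=\hbar(L_n)<\infty$, so there exists a non-constant $J$-holomorphic disk $u_n\colon(B^2,\partial B)\to(M,L_n)$ with $A_n\coloneqq\int u_n^*\omega\le\invH(L_n,J)+\tfrac1n$, whence $\limsup_nA_n\le H<S$.

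The next step is to transfer these disks to a common boundary condition so that Gromov compactness applies. Since $L_n\to L$ in $C^\infty$, for large $n$ the torus $L_n$ is the graph of a closed $1$-form $\sigma_n\to0$ in a fixed Weinstein neighborhood $T^*L\hookrightarrow M$; fiberwise translation by $\sigma_n$ (a symplectomorphism, as $\sigma_n$ is closed) yields symplectomorphisms $\psi_n\to\mathrm{id}$ with $\psi_n(L)=L_n$. Then $v_n\coloneqq\psi_n^{-1}\circ u_n$ are $J_n$-holomorphic disks with boundary on the \emph{fixed} Lagrangian $L$, where $J_n\coloneqq\psi_n^*J\to J$ is again $\omega$-tame, and $\int v_n^*\omega=A_n$. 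A uniform version of the monotonicity lemma, valid because the $J_n$ converge on the compact set near $L$, furnishes a constant $\hbar_0>0$ with $A_n\ge\hbar_0$, which prevents the disks from collapsing to a point.

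Finally, I would invoke Gromov compactness for $J_n$-holomorphic disks with fixed boundary $L$ and uniformly bounded area: a subsequence of $v_n$ converges to a stable $J$-holomorphic configuration with boundary on $L$ whose total area equals $\lim A_n\in[\hbar_0,H]$. Since $H<S$ and every non-constant sphere component would contribute area at least $S$, no sphere bubbles can form; since the total area is positive, at least one disk component is non-constant. That component is a non-constant $J$-holomorphic disk with boundary on $L$ of area at most $H$, so $\invH(L,J)\le H$. As $J$ was arbitrary, $\invH(L)\le H$ and hence $\hbar(L)\le H$. I expect the main obstacle to be precisely this compactness step: ruling out loss of energy in the limit and controlling sphere/disk bubbling for the varying almost complex structures $J_n$, together with establishing the uniformity of the monotonicity constant $\hbar_0$ along the sequence.
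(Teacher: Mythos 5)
Your proposal is correct and follows essentially the same route as the paper: transfer the near-minimal $J$-holomorphic disks on $L_n$ to disks with boundary on the fixed Lagrangian $L$ for a converging sequence of tame almost complex structures (Fukaya's trick), apply Gromov compactness, and use $H<S$ together with the discreteness of $\omega(H_2(M))$ to rule out the degenerate case where all disk components are constant. Your two extra precautions --- noting that $\hbar(L_n)=\invH(L_n)$ for large $n$ since $\invS(M)\ge S>H$, and invoking a uniform monotonicity constant to keep the areas bounded away from zero --- are sound refinements of points the paper treats implicitly.
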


The proof is an immediate corollary of Gromov compactness. The original version of Gromov compactness applies for fixed Lagrangian boundary condition and a converging sequence of almost complex structures. To use compactness in our case, of converging Lagrangian boundary conditions and a fixed almost complex structure, we apply a folklore trick which appears, for example, as ``Fukaya's trick'' in \cite[Section 2.1]{shelukhin2023geometrysymplecticfluxlagrangian}.

\begin{proof}[Proof of Proposition \ref{hbarLSCprop}]
Let $J$ be a tame almost complex structure on $M$. For every $n$, let $u_n\colon(B,\partial B)\to(M,L_n)$ be a $J$-holomorphic curve realizing the infimum in the definition of $\invH$, that is $\omega(u_n)=\invH(L_n,J)$. Such a curve exists by Gromov compactness. Use Fukaya's trick, as follows. Let $f_n$ be Hamiltonian diffeomorphisms taking $L_n$ to $L$ inside a Weinstein neighborhood of $L$, and set $J_n\coloneqq(f_n)_*J$. 
We can trade the $J$-holomorphic disk $u_n$ with boundary on $L_n$ for a $J_n$-holomorphic disk $v_n\coloneqq f_n\circ u_n$ with boundary on $L$. We have $\omega(v_n)\xrightarrow[n\to\infty]{}H$, so by Gromov compactness, there exists a converging subsequence to some $J$-holomorphic disk $v$ with boundary on $L$, and there also exists a collection of $J$-holomorphic curves $g_k\colon(B^2,\partial B)\to(M,L)$ and $h_k\colon S^2\to M$ such that \[\lim_n\omega(v_n)=\omega(v)+\sum_k\omega(g_k)+\sum_k\omega(h_k).\]  It cannot be that $v$ and all disks $g_k$ are constant because then we would have \[H=\lim_n\omega(v_n)=\omega(v)+\sum_k\omega(g_k)+\sum_k\omega(h_k)=\sum_k\omega(h_k)\ge S\] in contradiction to the assumption. Thus $v$ is $J$-holomorphic, non-constant and $\omega(v)\le\lim_n\omega(v_n)$. Thus $\invH(L,J)\le\lim_n\omega(v_n)=H$. This works for arbitrary tame $J$, so we're done.
\end{proof}

This is reminiscent of the work \cite{shelukhin2023geometrysymplecticfluxlagrangian} studying the invariant $\Psi(L)>0$, which is defined as the minimal symplectic area of pseudo-holomorphic disks with boundary on $L$, satisfying a certain condition coming from the Fukaya-$A_\infty$ algebra. Since $\invH(L)$ is the minimum symplectic area of \emph{all} pseudo-holomorphic disks with boundary on $L$, $\invH(L)\le\Psi(L)$. \cite[Theorem A]{shelukhin2023geometrysymplecticfluxlagrangian} states that $\Psi$ is continuous when evaluated on Lagrangians isotopies.
\newline

One of the main motivations of the present project is to seek examples of \emph{displaceable} Lagrangians where $\hbar\neq e$. The following $4$-dimensional example was found by \cite{fukaya2011displacementpolydiskslagrangianfloer}, \cite[Example 5.6]{fukCompactToricFibers10}. 

\begin{example}[A non-monotone Lagrangian with $\hbar<e<\infty$]\label{hLTeFntExamp}
Let $S^2(a)\subset\mathbb{R}^3$ denote the unit sphere with the standard area form scaled to have total area $a$. Take the product of two spheres with areas $2a<4a$. This is a non-monotone closed toric symplectic manifold, $M=S^2(2a)\times S^2(4a)\subset(\mathbb{R}^3)^2$, with moment polytope $\Delta=[-a,a]\times[-2a,2a]$. The moment map is
$\mu(x_1,y_1,z_1,x_2,y_2,z_2)=(az_1,2az_2)$.
Denote by $T(x)\subset M$ the fiber over $x\in\Delta$. Fix $0<\epsilon<a$ and let \[{\color{Green}
L\coloneqq T(0,a-\epsilon)}\subset M,\]
i.e., the equator in the smaller sphere times a curve bounding area $a+\epsilon$ in the bigger sphere. Then $L$ is a non-monotone Lagrangian torus with $\hbar(L)\le a<a+\epsilon=e(L)<\infty$. 
\begin{figure}[h]
\centering
     \begin{tikzpicture}[x=0.8cm,y=0.8cm]
     \def\eps{0.1}
  % Square
  \draw (0,0) -- (8,0) -- (8,4) -- (0,4) -- cycle;

  % Red line segment between the two points
  \draw[red, line width=1pt] (2,2) -- (6,2);

  % Curly braces to indicate equal distances
  \draw[decorate, decoration={brace,amplitude=5pt,mirror}] (2-\eps,3.95) -- (2-\eps,2) node[midway,xshift=-11pt] {\small$a$};
  \draw[decorate, decoration={brace,amplitude=5pt,mirror}] (2-\eps,2) -- (2-\eps,0.05) node[midway,xshift=-11pt] {\small$a$};
  
  % Points
  \fill[red] (2,2) circle (1.5pt);
  \draw[decorate, decoration={brace,amplitude=5pt}] (6.02,2+\eps) -- (7.98,2+\eps) node[midway,yshift=11pt] {\small$a$};
  \draw[decorate, decoration={brace,amplitude=5pt,mirror}] (4,2-\eps) -- (7.98,2-\eps) node[midway,yshift=-11pt] {\small$2a$};
  \fill[red] (6,2) circle (1.5pt);
  \draw[decorate, decoration={brace,amplitude=5pt}] (4.8,2+\eps) -- (5.98,2+\eps) node[midway,yshift=9pt] {\small$\epsilon$};
  \fill[Green] (4.8,2) circle (1.5pt) node[above left, xshift=3pt]{\small$L$};
  \fill (4,2) circle (1.5pt);
\end{tikzpicture}\captionsetup{width=0.8\linewidth} 
    \caption{Proposition \ref{eUSCProp} in action. The closed \text{\color{red}segment} is the discontinuity set of $x\in\Delta\mapsto e(T(x))$. The middle fiber is non-displaceable.}
\end{figure}
\end{example}
\begin{proof}
    The second factor of $L$ can be displaced by a Hamiltonian isotopy of Hofer energy arbitrarily close to the smaller area enclosed by this factor, which is $a+\epsilon$. This isotopy displaces $L$. Thus, this is an upper bound on $e(L)$. The lower bound comes from \cite[Example 5.6]{fukCompactToricFibers10}.
    
    By Proposition \ref{hbarLSCprop}, $\invH(L)=a$. Indeed, note that all fibers outside the \text{\color{red}segment} have $e=\hbar$. The convergent sequence of Lagrangians to {\color{Green}
$L$}, consists of such fibers.
    
    But $\omega(H_2(M))=\mathbb{Z}[2a]$ so $\invS(L)\ge2a$. Thus $\hbar(L)=a$.
\end{proof}

\section{Chekanov tori}\label{ChekanovSection}

Let $n\ge2$. We start with a brief review of the construction of Chekanov torus in $\mathbb{C}^n$ via symplectic reduction. We refer to \cite{knot-survey,joeIntroChekanovTorus} for details.

\paragraph{In $\mathbb{C}^n$.}

Recall the standard toric moment map $\mu(z_1,\dotsc,z_n)=\left(\pi\lvert z_1\rvert^2,\dotsc,\pi\lvert z_n\rvert^2\right)$ of $\mathbb{C}^n$, having $(\mathbb{R}_{\ge0})^n$ as its image. The moment map \eqref{chekanovMomentMapEqn} generates the $T^{n-1}$-action \[\theta\in\scomp{\theta\in T^n}{\sum_j\theta_j=1}\cong T^{n-1},\quad\theta\cdot(z_1,\dotsc,z_n)\mapsto\left(e^{2\pi i\theta_1}z_1,\dotsc,e^{2\pi i\theta_n}z_n\right).\] This is a Hamiltonian $T^{n-1}$-action on $\mathbb{C}^n$ having the origin as its single fixed point. The level set \[\mathcal{C}^{-1}(\vec{0})=\scomp{(z_1,\dotsc,z_n)\in\mathbb{C}^n}{\forall j.\,\lvert z_1\rvert=\lvert z_j\rvert},\] is invariant under this action. After removing the origin from this level set, and taking the $T^{n-1}$-quotient, we obtain a reduced space symplectomorphic to the standard punctured plane $\mathbb{C}^*\coloneqq\mathbb{C}\setminus\{0\}$. Let $a>0$. While the monotone product torus $T(a,\dotsc,a)\subset\mathcal{C}^{-1}(\vec{0})$ projects to the circle $S^1(a)\subset\mathbb{C}^*$, the Chekanov torus $T^n_\text{Ch}(a)\subset\mathbb{C}^n$ is obtained by lifting a \emph{null-homotopic} loop $\Gamma\subset\mathbb{C}^*$ bounding area $a$.
\begin{equation}\label{highDimChekanovTorusExprEqn}
    T^n_\text{Ch}(a)=\scomp{\left(\lvert z\rvert e^{2\pi i\theta_1},\dotsc,\lvert z\rvert e^{2\pi i\theta_{n-1}},e^{2\pi i\theta_n}z\right)}{z\in\Gamma,\,\theta\in T^n,\,\sum_j\theta_j=1}
\end{equation}

\begin{figure}[h]
\centering
    \begin{tikzpicture}[x=0.5cm,y=0.5cm,z=0.3cm,>=stealth]
% The axes
\draw[help lines,->] (xyz cs:x=0) -- (xyz cs:x=9);
\draw[help lines,->] (xyz cs:y=0) -- (xyz cs:y=9);
\draw[help lines,->] (xyz cs:z=0) -- (xyz cs:x=-1.3,z=9);

\fill[pattern=horizontal lines, opacity=0.3] (xyz cs:x=0, y=0, z=0) -- (xyz cs:x=-1.3, y=0, z=9) -- (xyz cs:x=-1.3, y=6, z=9) -- (xyz cs:x=0, y=9) -- cycle;

 % 3D line from (x1, y1, z1) to (x2, y2, z2)
\draw[dashed] (xyz cs:x=0, y=0, z=0) -- (xyz cs:x=3.6, y=4, z=4) 
    node[above right] {$\mu(\mathcal{C}^{-1}(0,0))$};

\draw[red, thick] (xyz cs:x=0.9, y=1, z=1) -- (xyz cs:x=1.8, y=2, z=2);
\end{tikzpicture}
\caption{The \text{\color{red}segment} is the image of $T^3_\text{Ch}$ under $\mu$ in $(\mathbb{R}_{\ge0})^3$.}
\end{figure}
The Chekanov torus is a monotone Lagrangian knot. It is simple to describe the holomorphic disks with boundary on $T^n_\text{Ch}$.
\begin{prop}[{\cite[Proposition 4.2.C]{knot-survey}} and {\cite[Example 3.3.1]{auroux2009speciallagrangianfibrationswallcrossing}}]\label{chekanovMultiDimDisksProp}
    There is exactly one class $\alpha\in H_2(\mathbb{C}^n,T^n_\text{Ch}(a))$ with $\mu(\alpha)=2$ and a holomorphic representative. 
\end{prop}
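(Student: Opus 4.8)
The plan is to analyze $J_0$-holomorphic disks with boundary on $T^n_{\text{Ch}}(a)$ using the symplectic reduction picture that defines this torus, reducing the problem in $\mathbb{C}^n$ to a tractable problem about holomorphic disks in the reduced space $\mathbb{C}^*$ together with the $T^{n-1}$-fibration data. The central idea is that the $T^{n-1}$-action used in the construction allows one to project holomorphic disks downstairs, and the Maslov index $\mu(\alpha)=2$ constraint severely restricts the topological type of admissible disks.

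**First I would** set up the relevant homology and Maslov computations. The group $H_2(\mathbb{C}^n, T^n_{\text{Ch}}(a))\cong\mathbb{Z}^n$ (since $T^n_{\text{Ch}}$ is a torus and $\mathbb{C}^n$ is contractible, so $H_2(\mathbb{C}^n,L)\cong H_1(L)\cong\mathbb{Z}^n$). I would identify an explicit basis of relative classes adapted to the toric/reduction structure — namely classes coming from the $n-1$ circle factors of the $T^{n-1}$-orbit directions together with one class transverse to the reduction (the lift of a disk bounding $\Gamma$ in $\mathbb{C}^*$). Computing the Maslov index $\mu$ and area $\omega$ on each basis class is the bookkeeping heart of the argument; these follow from the standard formula for toric/monotone Lagrangians, where $\mu$ reads off intersection numbers with the coordinate hyperplanes $\{z_j=0\}$. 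The condition $\mu(\alpha)=2$ then picks out exactly those classes meeting the divisors with total multiplicity one.

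**The key geometric step** is to use positivity of intersections. Any $J_0$-holomorphic disk $u$ in class $\alpha$ with $\mu(\alpha)=2$ must have total intersection number one with the union of coordinate hyperplanes $\bigcup_j\{z_j=0\}$, so it meets exactly one such hyperplane transversally in a single point and is disjoint from the others. This forces the disk to project, under symplectic reduction away from that one hyperplane, to a holomorphic disk in $\mathbb{C}^*$ bounding $\Gamma$; since $\Gamma$ is null-homotopic and bounds area $a$, the minimal such disk is essentially unique up to the residual $T^{n-1}$ reparametrization. I would then either invoke the explicit wall-crossing/special-Lagrangian analysis of Auroux (Example 3.3.1) and the Evans–Lekili or Chekanov–Schlenk description cited in the statement, or reconstruct uniqueness by showing the reduced problem has a unique solution and that the lift is determined up to the $T^{n-1}$-action, giving exactly \emph{one} homotopy class $\alpha$ with a holomorphic representative.

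**The hard part will be** controlling which disk classes can \emph{actually} be realized by honest $J_0$-holomorphic maps, as opposed to merely being allowed by the index count — in particular ruling out disks with higher intersection multiplicity or bubbling of lower-area disks, and confirming regularity of $J_0$ so that the count is well-defined. I expect the cleanest route is to cite the two references in the proposition statement for the explicit classification and limit my independent contribution to verifying the Maslov-index normalization and the positivity-of-intersections step; a fully self-contained argument would require checking transversality of $J_0$ for the relevant moduli space, which is where the genuine analytic content lies.
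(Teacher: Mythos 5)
The paper offers no proof of this proposition at all --- it is quoted verbatim from Eliashberg--Polterovich and Auroux --- so the only internal point of comparison is the analogous argument the paper runs for Brendel tori in Proposition \ref{possibleClsUpsilonProp}. Your outline (positivity of intersections with divisors adapted to the reduction, deferring existence and regularity to the references) is the right one, but your central step is false. You assert that the Maslov index on $T^n_\text{Ch}(a)$ is read off from intersections with the coordinate hyperplanes, so that a Maslov $2$ disk must meet $\bigcup_j\{z_j=0\}$ with total multiplicity one. That formula holds for the product (Clifford) torus, which is special Lagrangian for $\frac{dz_1\wedge\dots\wedge dz_n}{z_1\cdots z_n}$, but it fails for the Chekanov torus, and this failure is precisely what makes $T^n_\text{Ch}$ exotic. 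Concretely, the class $\alpha$ of the proposition is represented by the lift $w\mapsto(\lvert w\rvert,\dots,\lvert w\rvert,w)$ of the closed disk $D\subset\mathbb{C}^*$ bounded by $\Gamma$; since $0\notin D$, this disk is disjoint from \emph{every} coordinate hyperplane, yet $\mu(\alpha)=2$ and $\omega(\alpha)=a$ (compare the row for $\alpha$ in the table of Proposition \ref{possibleClsUpsilonProp}, where the intersections with the two coordinate planes are $0$). Writing a general Maslov $2$ class as $\alpha+\sum_jm_j\beta_j$ with $\beta_j$ the Maslov $0$, area $0$ orbit classes, one finds it meets $\{z_j=0\}$ with multiplicity $m_j$ for $j<n$ and $\{z_n=0\}$ with multiplicity $-\sum_jm_j$, so the total is always $0$; your criterion ``total intersection equal to one'' is satisfied by no class at all and would lead you to conclude that no Maslov $2$ holomorphic disks exist.

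The correct argument, which is what Auroux's Example 3.3.1 and the paper's Proposition \ref{possibleClsUpsilonProp} actually do, uses the coordinate hyperplanes only to force $m_j\ge0$ for all $j$ and $-\sum_jm_j\ge0$ simultaneously, hence $m_j=0$; the $\alpha$-coefficient is pinned to $1$ by the Maslov index alone (and for higher Maslov index one needs the additional divisor $\{z_1\cdots z_n=\varepsilon\}$, the fiber of the $T^{n-1}$-invariant holomorphic function over a value corresponding to a point of the interior of $D$ --- the role played by $\Sigma_F$ in the paper). Two further gaps: ``projecting under symplectic reduction'' is not available for a holomorphic disk, since the disk does not lie in the level set $\mathcal{C}^{-1}(\vec{0})$; the actual mechanism is to compose $u$ with $z_1\cdots z_n$ and apply the maximum principle to the resulting holomorphic map into the region bounded by the image of $\Gamma$. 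And positivity of intersections only rules classes \emph{out}; the statement ``exactly one'' also requires exhibiting a holomorphic representative of $\alpha$, which needs the explicit Chekanov--Schlenk/Auroux disks (or a degeneration argument) and is legitimately delegated to the cited sources, as the paper itself does.
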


By the monotonicity of the Chekanov torus, $\omega(\alpha)=a$ and
\begin{equation}\label{multiDimChekanovstdJinvHEqn}
    \invH(T^n_\text{Ch}(a),J_0)=a.
\end{equation}

\paragraph{In $\mathbb{C}P^n$.}

For $0<a<\frac{\pi}{n}$, there exists a null-homotopic curve $\Gamma\subset B^*\left(\frac{\pi}{n}\right)\subset\mathbb{C}^*$ of area $a$. By \eqref{highDimChekanovTorusExprEqn}, $T^n_\text{Ch}(a)\subset B^{2n}(\pi)$ for such $\Gamma$. Denote a standard Darboux chart onto the complement $\mathcal{U}\subset\mathbb{C}P^n$ of a hyperplane by $\varphi\colon B^{2n}(\pi)\to\mathcal{U}$. We work with the Fubini-Study symplectic form $\omega_\text{FS}$, normalized such that $\int_{\mathbb{C}P^1}\omega_\text{FS}=\pi$. Thus, we define the Chekanov torus in $\mathbb{C}P^n$ by
\begin{equation}\label{chekanovTorusProjSpaceChartEqn}
    T^n_\text{PrCh}(a)\coloneqq\varphi\left(T^n_\text{Ch}(a)\right).
\end{equation}

It is only monotone for $a=\frac{\pi}{n+1}$. It is a folklore fact that in the case $a\ge\frac{\pi}{n+1}$, $T^n_\text{PrCh}(a)$ has non-vanishing Floer cohomology and therefore is non-displaceable, see \cite{chekanov2010notes,fukaya2010toricdegenerationnondisplaceablelagrangian}. In this note, we study the \emph{small} case $a<\frac{\pi}{n+1}$, cf. \cite{lou2024lagrangiantoris2timess2}.

In order to displace $T^n_\text{PrCh}(a)\subset\mathbb{C}P^n$, we first optimize the position inside the reduced space of the curve $\Gamma\subset\mathbb{C}^*$ whose lift yields $T^n_\text{Ch}\subset\mathbb{C}^n$. This gives a $T^n_\text{Ch}$ which is not spread out in the ambient $\mathbb{C}^n$ space. Explicitly, let $\frac{\pi}{n+1}-a>\epsilon>0$. Choose $\Gamma\subset B^*\left(\frac{\pi}{n}\right)$ to be contained in $B(a+\epsilon)$. Then by \eqref{highDimChekanovTorusExprEqn}, the Chekanov torus is contained in the polydisk \begin{equation}\label{chekanovTorusPolydiskEqn}
    T_\text{Ch}^n(a)\subset B^2(a+\epsilon)^n\subset B^{2n}(\pi).
\end{equation}

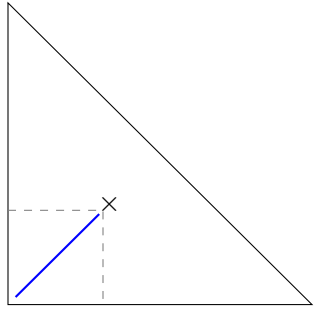
\begin{figure}[h]
    \centering
\begin{tikzpicture}    
    \draw (0,0) -- (4,0) -- (0,4) -- cycle;

    \node at (4/3,4/3) {$\times$};
    \draw[blue, thick] (0.1,0.1) -- (1.2,1.2);
    \draw[gray, dashed] (0,1.25) -- (1.25,1.25) -- (1.25,0);
\end{tikzpicture}
    \caption{A bi-disk containing \text{\color{blue}$T_\text{PrCh}^2(a)$} in $\mathbb{C}P^2$ for $a<\frac{\pi}{3}$.}
\end{figure}

We now write an explicit Hamiltonian flow that displaces small Chekanov tori in $\mathbb{C}P^2$. The generalization to $\mathbb{C}P^n$ is straightforward. The following is an autonomous Hamiltonian $\mathbb{C}P^2\to\mathbb{R}$ which I learned from Lily Reisch. The Hamiltonian given by \[[w_0\colon w_1\colon w_2]\mapsto\frac{\pi\lvert w_1-w_2\rvert^2}{2\sum_j\lvert w_j\rvert^2}\] is clearly well-defined, and generates the flow, for $t\in\mathbb{R}$,
\[[w_0\colon w_1\colon w_2]\mapsto\left[w_0\colon\frac{w_1+w_2+e^{\pi it}(w_1-w_2)}{2}\colon\frac{w_2+w_1+e^{\pi it}(w_2-w_1)}{2}\right]\] whose time-$1$ map is \begin{equation}\label{permSympEqn}
    [w_0\colon w_1\colon w_2]\mapsto[w_0\colon w_2\colon w_1].
\end{equation}

\begin{figure}[h]
    \centering
\begin{tikzpicture}    
    \draw (0,0) -- (4,0) -- (0,4) -- cycle;

    \node at (4/3,4/3) {$\times$};
    \draw[blue, thick] (0.1,0.1) -- (1.2,1.2);
    \draw[red, thick] (3.8,0.1) -- (1.5,1.25);
\end{tikzpicture}
    \caption{The image of \text{\color{blue}$T_\text{PrCh}^2(a)$} in $\mathbb{C}P^2$ displaced by \eqref{permSympEqn} for $a<\frac{\pi}{3}$.}
\end{figure}
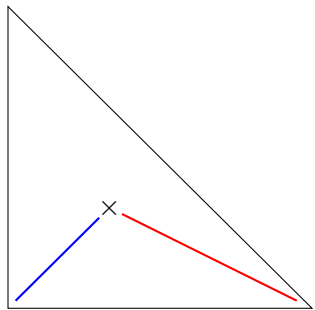

However, it is not easy to see the most efficient displacement from the above approach.

\begin{prop}\label{displaceSmallChekanovToriProp}
    For $a<\frac{\pi}{n+1}$, we have $e\left(T_\text{PrCh}^n(a)\right)\le a$.
\end{prop}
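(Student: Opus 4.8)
The plan is to exploit the narrowness of $T^n_\text{Ch}(a)$ recorded in \eqref{chekanovTorusPolydiskEqn}: after the optimized choice of $\Gamma$, every point of $T^n_\text{Ch}(a)$ has all of its coordinates of the same modulus $|z|$ with $z\in\Gamma$, so that $\pi|z_j|^2<a+\epsilon$ for each $j$, where $\epsilon$ is any number with $0<\epsilon<\frac{\pi}{n+1}-a$. I would fix such an $\epsilon$, aim to displace $\varphi(T^n_\text{Ch}(a))$ with Hofer energy at most $a+\epsilon$, and then let $\epsilon\to0$.

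First I would work inside the Darboux chart, where $\varphi$ is a symplectomorphism onto $\mathcal{U}\subset\mathbb{C}P^n$ and the standard form is in force, and displace the torus by moving only its first coordinate $z_1$. The projection of $T^n_\text{Ch}(a)$ to the $z_1$-plane is contained in the disk $B^2(a+\epsilon)$. Along the torus the remaining coordinates satisfy $\sum_{j\ge2}\pi|z_j|^2<(n-1)(a+\epsilon)$, so a point of the torus stays inside the ball $B^{2n}(\pi)=\{\sum_j\pi|z_j|^2<\pi\}$ as long as its first coordinate remains in the disk of area $\pi-(n-1)(a+\epsilon)$. The decisive inequality is $2(a+\epsilon)<\pi-(n-1)(a+\epsilon)$, which is equivalent to $(n+1)(a+\epsilon)<\pi$ and hence holds by our choice of $\epsilon$; this is exactly the room needed to slide the disk $B^2(a+\epsilon)$ off itself inside a larger disk in the $z_1$-plane. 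I would realize this slide as the time-one flow of an autonomous Hamiltonian $h(z_1)$ depending only on $z_1$, with oscillation as close to $a+\epsilon$ as desired.

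Next I would promote $h$ to a Hamiltonian $H$ on all of $\mathbb{C}P^n$. Multiplying $h(z_1)$ by a cutoff $\chi(z_2,\dots,z_n)$ equal to $1$ where $\sum_{j\ge2}\pi|z_j|^2\le(n-1)(a+\epsilon)$ and vanishing near $\partial B^{2n}(\pi)$ produces a function supported in $\operatorname{im}\varphi$, which extends by zero to a smooth Hamiltonian on $\mathbb{C}P^n$ whose Hofer norm is still controlled by the oscillation of $h$, i.e. at most $a+\epsilon$ up to an error I can absorb. Since $\chi\equiv1$ in a neighborhood of the $(z_2,\dots,z_n)$-values occurring on the torus, the flow moves only the $z_1$-coordinate of each torus point while fixing the others, so it reproduces the planar slide and therefore satisfies $\phi^H_1(T^n_\text{PrCh}(a))\cap T^n_\text{PrCh}(a)=\emptyset$. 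In the moment polytope this is the picture advertised in the introduction: the segment $\mu(T^n_\text{PrCh}(a))$ sits so close to the origin vertex that it can be pushed past the opposite facet along the first axis.

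The main obstacle I anticipate is not the geometric idea but the bookkeeping in the last step: arranging the cutoff so that the flow of $H$ genuinely keeps each torus point inside $B^{2n}(\pi)$ throughout the isotopy (not merely at the endpoints), and so that neither the cutoff nor the passage to $\mathbb{C}P^n$ inflates $\max H-\min H$ beyond $a+\epsilon+o(1)$. Once this is verified, $e(T^n_\text{PrCh}(a))\le a+\epsilon$ for every admissible $\epsilon$, and letting $\epsilon\to0$ yields $e(T^n_\text{PrCh}(a))\le a$. An alternative route I considered is to perform symplectic reduction by the $T^{n-1}$-action, where $T^n_\text{PrCh}(a)$ becomes a circle bounding area $a$ in a reduced $2$-sphere of area $\tfrac{2\pi}{n+1}$; since $a<\tfrac{\pi}{n+1}$ is less than a hemisphere, that circle is displaceable with energy $a$, and one lifts the displacing isotopy through the reduction. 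This merely trades the cutoff bookkeeping for the analogous task of controlling the Hofer norm under lifting.
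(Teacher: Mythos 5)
Your proposal is correct and follows essentially the same route as the paper: both exploit the polydisk containment $T^n_\text{Ch}(a)\subset B^2(a+\epsilon)^n\subset B^{2n}(\pi)$, displace by a planar Hamiltonian acting only on the $z_1$-coordinate with oscillation close to $a+\epsilon$ (the room needed being exactly your inequality $(n+1)(a+\epsilon)<\pi$), cut off to obtain a compactly supported Hamiltonian inside $B^{2n}(\pi)$ without inflating the Hofer norm, and transfer through the Darboux chart before letting $\epsilon\searrow0$. The only cosmetic difference is that the paper cuts off near the trajectory of the polydisk rather than via a cutoff in the $(z_2,\dots,z_n)$-variables, which handles the same bookkeeping you flag.
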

\begin{proof}
Let $\pi-(n+1)(a+\epsilon)>\delta>0$. We have $e_{B^2(2a+2\epsilon+\delta)}(B^2(a+\epsilon))=a+\epsilon$. Thus, there exists a Hamiltonian isotopy $\phi_t\colon\mathbb{C}\to\mathbb{C}$, supported in $B^2(2a+2\epsilon+\delta)$, which displaces $B^2(a+\epsilon)$ from itself, and has energy at most $a+\epsilon+\delta$. Define the isotopy $\Phi_t\colon B^{2n}(\pi)\to B^{2n}(\pi)$ by $(z_1,\dotsc,z_n)\mapsto(\phi_t(z_1),z_2,\dotsc,z_n)$. It clearly displaces $B^2(a+\epsilon)^n$ from itself. Cut-off $\Phi_t$ outside a neighborhood of the trajectory $\bigcup_t\Phi_t(B^2(a+\epsilon)^n)\subset B^{2n}(\pi)$, small enough to make its energy at most $a+\epsilon+2\delta$. Taking $\delta\searrow0$, we get $e_{B^{2n}(\pi)}(B^2(a+\epsilon)^n)=a+\epsilon$. Now, taking $\epsilon\searrow0$,
\begin{align*}
    e_{\mathbb{C}P^n}(T_\text{PrCh}^n(a))&\le e_\mathcal{U}(T_\text{PrCh}^n(a))\stackrel{\eqref{chekanovTorusProjSpaceChartEqn}}{=}e_{B^{2n}(\pi)}(T_\text{Ch}^n(a))\stackrel{\eqref{chekanovTorusPolydiskEqn}}{\le}e_{B^{2n}(\pi)}(B^2(a+\epsilon)^n)=a+\epsilon\searrow a.
\end{align*}
\end{proof}

A useful fact about $\invH$ is that it respects symplectic embeddings in the following sense.
\begin{prop}[{\cite[Lemma 2.2]{CS16}}]\label{csLagEmbeddinvHLem}
    Let $L\subset B^{2n}(a)\subset\mathbb{C}^n$ be a closed Lagrangian and $\varphi\colon B^{2n}(A)\to M$ be a Darboux chart with $A>a$. Then \[\invH(\varphi(L))\ge\min\{\invH(L,J_0),\,A-a\}.\]
\end{prop}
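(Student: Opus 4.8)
The plan is to bound the supremum defining $\invH(\varphi(L))$ from below by exhibiting one convenient almost complex structure and analyzing its holomorphic disks via a dichotomy. First I would fix $A'\in(a,A)$. Since $\varphi$ is symplectic, $\varphi_*J_0$ is an $\omega$-compatible (hence $\omega$-tame) almost complex structure on the compact set $\varphi(\overline{B^{2n}(A')})$; by the standard extension property of tame almost complex structures I would extend it to some $J\in\mathcal{J}_\omega$ on all of $M$ with $J=\varphi_*J_0$ on $\varphi(\overline{B^{2n}(A')})$. Because $\invH(\varphi(L))=\sup_{J}\invH(\varphi(L),J)$, it suffices to show that every non-constant $J$-holomorphic disk $u\colon(B^2,\partial B^2)\to(M,\varphi(L))$ satisfies $\omega(u)\ge\min\{\invH(L,J_0),\,A'-a\}$, and then let $A'\nearrow A$.

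Then I would split into two cases according to whether $u$ remains inside the chart. If $u(B^2)\subset\varphi(\overline{B^{2n}(A')})$, then $\tilde u\coloneqq\varphi^{-1}\circ u$ is a non-constant $J_0$-holomorphic disk in $\overline{B^{2n}(A')}\subset\mathbb{C}^n$ with boundary on $L$; since $\varphi^*\omega=\omega_0$ we get $\omega(u)=\omega_0(\tilde u)\ge\invH(L,J_0)$ directly from the definition of $\invH(L,J_0)$. The remaining case is $u(B^2)\not\subset\varphi(\overline{B^{2n}(A')})$, where I claim $\omega(u)\ge A'-a$. Working in the chart, identify $\varphi(\overline{B^{2n}(A')})$ with the standard $\overline{B^{2n}(A')}$, on which $J=J_0$ and $\omega=\omega_0$, and let $f(z)=\pi|z|^2$ be the capacity function, so that $f\le a$ on $L$ while $f\circ u$ attains values approaching $A'$ (as $u$ leaves the open ball $B^{2n}(A')$). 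After perturbing $a$ to a regular value of $f$ restricted to the curve, the part of $\varphi^{-1}\circ u$ lying in the shell $\{a\le f\le A'\}$ is a $J_0$-holomorphic curve whose boundary lies on the inner sphere $\{f=a\}$ and the outer sphere $\{f=A'\}$ and which reaches the latter.

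The technical heart is the monotonicity lemma for $J_0$-holomorphic curves in the standard ball, which forces the symplectic area of such a shell-crossing curve to be at least the capacity difference $A'-a$; combined with $\omega_0\ge0$ on the discarded pieces this yields $\omega(u)\ge A'-a$. This monotonicity estimate, with the sharp constant $A'-a$, is the step I expect to be the main obstacle and the only genuinely analytic ingredient — everything else (the extension of $J$, the case split, and the pullback computation) is formal. I would derive it from the classical monotonicity formula for minimal surfaces, using that a $J_0$-holomorphic curve is calibrated by $\omega_0$ so that its symplectic and Riemannian areas coincide and that the round shell between capacities $a$ and $A'$ carries the correct normalization; alternatively one may invoke it as in \cite[Lemma 2.2]{CS16} directly. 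A secondary point requiring care is guaranteeing that a connected piece of the disk genuinely crosses the shell: this holds because $\partial B^2$ maps into $L\subset\{f\le a\}$ while some interior point maps arbitrarily close to $\{f=A'\}$, so by continuity the curve meets every intermediate level set. Finally, letting the outer capacity tend to $A'$ and then $A'\nearrow A$ upgrades the per-$J$ bound to $\invH(\varphi(L))\ge\min\{\invH(L,J_0),\,A-a\}$.
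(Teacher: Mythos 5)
The paper offers no proof of this proposition --- it is quoted verbatim from \cite[Lemma 2.2]{CS16} --- so I am comparing your argument to the standard proof in that reference. Your architecture coincides with it: make $J$ equal to $\varphi_*J_0$ on a slightly smaller compact chart $\varphi(\overline{B^{2n}(A')})$, extend tamely, split into disks that stay in the chart (pull back and invoke $\invH(L,J_0)$) and disks that escape (shell-crossing estimate), then let $A'\nearrow A$. The formal parts --- the extension of the almost complex structure, the case split, the pull-back computation, and the connectedness/intermediate-value argument producing a component that genuinely crosses the shell --- are all fine.

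The gap is in the one step you yourself flag as the analytic heart, and it is a quantitative one. The classical monotonicity formula for minimal surfaces, centered at a point $p$ of the curve, gives area at least $\pi\rho^2$ in the metric ball $B(p,\rho)$; for a point $p$ with $\pi\lvert p\rvert^2\le a$ inside the ball of capacity $A'$ (radius $\sqrt{A'/\pi}$), the largest admissible $\rho$ is $\sqrt{A'/\pi}-\sqrt{a/\pi}$, which yields only $\bigl(\sqrt{A'}-\sqrt{a}\bigr)^2$. This is strictly smaller than $A'-a$ whenever $a>0$, and the weaker constant would not suffice for the application in Proposition \ref{eAndHofChekanovProjProp}, where the full gap $\pi-\frac{\pi n}{n+1}=\frac{\pi}{n+1}$ is needed (for $n=2$ the centered-monotonicity constant is roughly $0.1$ versus the required $\pi/3$). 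What the argument actually requires is the sharp off-center estimate special to this setting: a holomorphic curve with boundary on the sphere of capacity $A'$ passing through a point of capacity at most $a$ has area at least $A'-a$, i.e., $\pi(R^2-d^2)$ rather than $\pi(R-d)^2$, together with an adaptation to the fact that the truncated piece of your disk has part of its boundary on the inner sphere or on $L$ rather than entirely on the outer sphere. This is exactly the auxiliary lemma proved in \cite{CS16} following \cite{Chekanov98}, and ``calibration plus classical monotonicity'' does not reproduce it; you should either prove this sharp shell estimate or cite that auxiliary lemma --- citing \cite[Lemma 2.2]{CS16} itself, as you propose as a fallback, would be circular since that is the statement being proved.
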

\begin{prop}\label{eAndHofChekanovProjProp}
    For $a<\frac{\pi}{n+1}$, we have \[e\left(T_\text{PrCh}^n(a)\right)=\hbar\left(T_\text{PrCh}^n(a)\right)=a.\]
\end{prop}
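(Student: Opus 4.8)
The plan is to sandwich $e$ and $\hbar$ between $a$ and $a$, using Chekanov's inequality to tie the two invariants together. For the upper direction I would simply invoke Proposition \ref{displaceSmallChekanovToriProp}, which already gives $e(T^n_\text{PrCh}(a))\le a$; combined with Chekanov's bound \eqref{ChekanovsIneqEqn} this yields $\hbar(T^n_\text{PrCh}(a))\le e(T^n_\text{PrCh}(a))\le a$. So the entire content of the proposition reduces to the single lower bound $\hbar(T^n_\text{PrCh}(a))\ge a$.

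Since $\hbar(L)=\min\{\invH(L),\invS(\mathbb{C}P^n)\}$, I would first observe that the sphere term is never binding: the minimal symplectic area of a sphere class in $\mathbb{C}P^n$ is that of a line, namely $\pi$, so $\invS(\mathbb{C}P^n)\ge\pi>a$. Hence it suffices to prove $\invH(T^n_\text{PrCh}(a))\ge a$, and then $\hbar\ge\min\{a,\pi\}=a$.

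The lower bound on $\invH$ is exactly where Proposition \ref{csLagEmbeddinvHLem} enters. By \eqref{chekanovTorusPolydiskEqn}, for $\tfrac{\pi}{n+1}-a>\epsilon>0$ we have $T^n_\text{Ch}(a)\subset B^2(a+\epsilon)^n\subset B^{2n}(n(a+\epsilon))$, the last inclusion because $\sum_j\pi\lvert z_j\rvert^2\le n(a+\epsilon)$ on the polydisk. I would then apply Proposition \ref{csLagEmbeddinvHLem} to the Lagrangian $T^n_\text{Ch}(a)\subset B^{2n}(n(a+\epsilon))$ and the Darboux chart $\varphi\colon B^{2n}(\pi)\to\mathcal{U}\subset\mathbb{C}P^n$ (admissible since $na<(n+1)a<\pi$ forces $n(a+\epsilon)<\pi$ for small $\epsilon$), obtaining $\invH(T^n_\text{PrCh}(a))=\invH(\varphi(T^n_\text{Ch}(a)))\ge\min\{\invH(T^n_\text{Ch}(a),J_0),\,\pi-n(a+\epsilon)\}$. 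Plugging in the $J_0$-computation \eqref{multiDimChekanovstdJinvHEqn}, namely $\invH(T^n_\text{Ch}(a),J_0)=a$, this becomes $\invH(T^n_\text{PrCh}(a))\ge\min\{a,\,\pi-n(a+\epsilon)\}$.

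The crux—and the only point where the hypothesis $a<\tfrac{\pi}{n+1}$ is used—is that the Darboux ``width'' $\pi-n(a+\epsilon)$ stays above $a$: since $(n+1)a<\pi$ we have $\pi-na>a$, so for $\epsilon$ small the minimum equals $a$, giving $\invH(T^n_\text{PrCh}(a))\ge a$. Assembling everything yields $a\le\hbar(T^n_\text{PrCh}(a))\le e(T^n_\text{PrCh}(a))\le a$, forcing all three to equal $a$. I expect no serious obstacle here, since the substantive analytic inputs are imported: the disk classification (Proposition \ref{chekanovMultiDimDisksProp}) underlying \eqref{multiDimChekanovstdJinvHEqn}, and the persistence of holomorphic disks in large Darboux charts (Proposition \ref{csLagEmbeddinvHLem}). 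The only delicate bookkeeping is choosing $\epsilon$ so that $n(a+\epsilon)<\pi$ and $\pi-n(a+\epsilon)\ge a$ hold simultaneously, both of which follow cleanly from $a<\tfrac{\pi}{n+1}$.
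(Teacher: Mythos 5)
Your proposal is correct and follows essentially the same route as the paper: upper bound from Proposition \ref{displaceSmallChekanovToriProp} plus Chekanov's inequality, lower bound from the polydisk containment \eqref{chekanovTorusPolydiskEqn} fed into Proposition \ref{csLagEmbeddinvHLem} together with \eqref{multiDimChekanovstdJinvHEqn}, and the observation that $\invS(\mathbb{C}P^n)\ge\pi$ rules out sphere contributions. The only cosmetic difference is that you enclose the polydisk in $B^{2n}(n(a+\epsilon))$ and let $\epsilon\to0$, whereas the paper uses the fixed ball $B^{2n}\left(\frac{\pi n}{n+1}\right)$; both give the same minimum $a$.
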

\begin{proof}
By Proposition \ref{displaceSmallChekanovToriProp} and Chekanov's theorem \eqref{ChekanovsIneqEqn} it is enough to show $\hbar\left(T_\text{PrCh}^n(a)\right)\ge a$.

The embedding \eqref{chekanovTorusPolydiskEqn} can be improved to \[T_\text{Ch}^n(a)\subset B^2(a+\epsilon)^n\subset B^{2n}\left(\frac{\pi n}{n+1}\right).\] Now by Equation \eqref{chekanovTorusProjSpaceChartEqn} and Proposition \ref{csLagEmbeddinvHLem} we have the inequality \[\invH(T_\text{PrCh}^n(a))\ge\min\left\{\invH(T_\text{Pr}^n(a),J_0),\,\pi-\frac{\pi n}{n+1}\right\}\stackrel{\eqref{multiDimChekanovstdJinvHEqn}}{=}\min\left\{a,\,\frac{\pi}{n+1}\right\}=a.\]

Finally, note that $\omega(H_2(\mathbb{C}P^n))=\omega(\mathbb{Z}[\mathbb{C}P^1])=\mathbb{Z}[\pi]$ which means no spheres contribute to Chekanov's theorem \eqref{ChekanovsIneqEqn}.
\end{proof}

\begin{figure}[h]
    \centering
\begin{tikzpicture}    
    \draw (0,0) -- (4,0) -- (0,4) -- cycle;

    \node at (4/3,4/3) {$\times$};
    \draw[blue, thick] (0.1,0.1) -- (1.2,1.2);
    \draw[blue, thick] (3.8,0.1) -- (1.5,1.25);
    \draw[blue, thick] (0.1,3.8) -- (1.25,1.5);
\end{tikzpicture}
    \caption{Do more than $3$ copies of \text{\color{blue}$T_\text{PrCh}^2(a)$} fit in $\mathbb{C}P^2$ for $\frac{\pi}{3}*0.99<a<\frac{\pi}{3}$? }
\end{figure}

\section{Brendel tori in \texorpdfstring{$\mathbb{C}^3$}{ℂ³}}\label{brendelToriSection}

In \cite{brendel2023local}, Brendel defines the moment map on $\mathbb{C}^3$, for $k\ge2$, \[\nu_k\coloneqq(\mu_1+k\mu_3,\mu_2-\mu_3).\] After removing points with a non-trivial stabilizer from the level set $\nu_k^{-1}(\pi,0)$, taking the quotient by the induced $T^2$-action \eqref{brendelT2actionEqn} produces a singular reduced space symplectomorphic to the punctured open disk $B^*\left(\frac{\pi}{k}\right)\coloneqq B^2\left(\frac{\pi}{k}\right)\setminus\{0\}\subset\mathbb{C}$, see Appendix \ref{UpsilonAppendix}. Lift a contractible curve $\Gamma\subset B^*\left(\frac{\pi}{k}\right)$, enclosing area $\frac{\pi}{k+1}\le a<\frac{\pi}{k}$, to obtain a Lagrangian torus
\begin{equation}\label{UpsilonExprEqn}
\Upsilon_k(a)=\scomp{\left(\sqrt{1-k\lvert z\rvert^2}e^{2\pi i\theta_1},\lvert z\rvert e^{2\pi i\theta_2},e^{2\pi i(k\theta_1-\theta_2)}z\right)}{\theta_i\in S^1,\,z\in\Gamma}\subset\mathbb{C}^3.
\end{equation}
\emph{Warning:} Our convention for the parameter $a$ differs from that of \cite{brendel2023local}.
\begin{figure}[h]
\centering
\begin{tikzpicture}[x=0.5cm,y=0.5cm,z=0.3cm,>=stealth]
% The axes
\draw[->] (xyz cs:x=0) -- (xyz cs:x=10.5) node[above] {$x_1$};
\draw[->] (xyz cs:y=0) -- (xyz cs:y=7.5) node[right] {$x_3$};
\draw[->] (xyz cs:z=0) -- (xyz cs:z=7.5) node[below right] {$x_2$};

 % 3D line from (x1, y1, z1) to (x2, y2, z2)
\draw[dashed] (xyz cs:x=9, y=0, z=0) -- (xyz cs:x=0, y=3, z=3) 
    node[above, xshift=7] {$\mu(\nu_k^{-1}(\pi,0))$};

\fill (xyz cs:x=9, y=0, z=0) circle (1pt);

\fill (xyz cs:x=0, y=3, z=3) circle (1pt);

\draw[red, thick] (xyz cs:x=6.3, y=0.9, z=0.9) -- (xyz cs:x=1.5, y=2.5, z=2.5);
\end{tikzpicture}
\caption{The \text{\color{red}segment} is the image of $\Upsilon_k$ under $\mu$ in $(\mathbb{R}_{\ge0})^3$.}
\end{figure}

Brendel proved that $\Upsilon_k$ is not symplectomorphic to $\Upsilon_{k'}$ for $k\neq k'$, yielding infinitely many exotic tori in $\mathbb{C}^3$. The existence of an infinite family in $\mathbb{C}^3$ of monotone Lagrangian tori, pairwise not symplectomorphic, was established earlier by Auroux \cite{Auroux_monTorR6}. More on the connection between the two works can be found below.

\subsection{\texorpdfstring{$e=\hbar$}{e=ℏ} in the monotone case}

Hamiltonian isotopies of the curve $\Gamma\subset B^*\left(\frac{\pi}{k}\right)$ in the reduced space induce Hamiltonian isotopies of $\Upsilon_k\subset\mathbb{C}^3$. However, $\Gamma\subset B^*\left(\frac{\pi}{k}\right)$ is non-displaceable since $a\ge\frac{\pi}{k+1}$ is greater than half of the area of the entire punctured disk. Thus we apply the idea used to bound the displacement energy of $T^n_\text{PrCh}\subset\mathbb{C}P^n$ in Proposition \ref{displaceSmallChekanovToriProp}.
\begin{prop}\label{dispEnergyUpsilonProp}
     $e(\Upsilon_k)\le a$
\end{prop}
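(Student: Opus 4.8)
The plan is to mimic the strategy already used for small Chekanov tori in Proposition \ref{displaceSmallChekanovToriProp}: since the curve $\Gamma$ in the reduced space is non-displaceable, I cannot displace $\Upsilon_k$ by isotoping $\Gamma$ within $B^*\!\left(\frac{\pi}{k}\right)$. Instead, I would optimize the \emph{position} of $\Gamma$ so that its lift $\Upsilon_k(a)$ is confined to a small region of the ambient $\mathbb{C}^3$, and then displace that region directly by an explicit Hamiltonian. Concretely, for the monotone value $a=\frac{\pi}{k+1}$ and a small $\epsilon>0$, I would choose $\Gamma\subset B^*\!\left(\frac{\pi}{k}\right)$ to be contained in a disk $B^2(a+\epsilon)$ hugging the origin (the puncture). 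The first step is therefore to read off from the explicit formula \eqref{UpsilonExprEqn} how the constraint $z\in\Gamma\subset B^2(a+\epsilon)$, i.e.\ $\pi|z|^2<a+\epsilon$, localizes each coordinate of a point of $\Upsilon_k(a)$ in the standard moment polytope of $\mathbb{C}^3$.

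Reading \eqref{UpsilonExprEqn}, the three ambient coordinates are $w_1=\sqrt{1-k|z|^2}\,e^{2\pi i\theta_1}$, $w_2=|z|e^{2\pi i\theta_2}$, and $w_3=e^{2\pi i(k\theta_1-\theta_2)}z$, so $\pi|w_2|^2=\pi|w_3|^2=\pi|z|^2<a+\epsilon$, while $\pi|w_1|^2=\pi(1-k|z|^2)$. The key observation is that the $w_2$- and $w_3$-coordinates are trapped in a small disk $B^2(a+\epsilon)$, exactly as the first Chekanov coordinate was in Proposition \ref{displaceSmallChekanovToriProp}. So the second step is to displace $\Upsilon_k$ using a Hamiltonian supported near the $w_2$-axis (or $w_3$-axis): take the planar isotopy $\phi_t$ of small Hofer energy displacing $B^2(a+\epsilon)$ inside a slightly larger disk $B^2(2a+2\epsilon+\delta)$, with $e_{B^2(2a+2\epsilon+\delta)}(B^2(a+\epsilon))=a+\epsilon$, and let it act only on the $w_2$ factor, $(w_1,w_2,w_3)\mapsto(w_1,\phi_t(w_2),w_3)$. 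This displaces the set $\{\pi|w_2|^2<a+\epsilon\}\supset\Upsilon_k(a)$, hence displaces $\Upsilon_k$, at energy $\le a+\epsilon+\delta$; a cut-off argument near the trajectory keeps the energy close to $a+\epsilon$, and letting $\delta,\epsilon\searrow0$ yields $e(\Upsilon_k)\le a$.

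The one genuinely new point — and the step I expect to require the most care — is verifying that $\Gamma$ can actually be chosen contractible, enclosing area exactly $a=\frac{\pi}{k+1}$, while fitting inside $B^2(a+\epsilon)$ \emph{as an embedded curve in the punctured disk} $B^*\!\left(\frac{\pi}{k}\right)$. Since $a+\epsilon$ can be taken strictly less than $\frac{\pi}{k}$ (as $\frac{\pi}{k+1}<\frac{\pi}{k}$), there is room to place such a $\Gamma$ near the puncture; I would note that contractibility in the punctured disk is exactly what the construction of $\Upsilon_k$ requires, so this choice is legitimate and the bounding area is preserved. I would also check that cutting off the lifted isotopy respects the reduction picture, i.e.\ that acting on one ambient coordinate genuinely displaces the torus rather than merely sliding it within the level set; this is where confirming, from \eqref{UpsilonExprEqn}, that $\pi|w_2|^2<a+\epsilon$ on all of $\Upsilon_k(a)$ does the real work. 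With that containment in hand, the displacement-energy estimate follows verbatim from the computation in Proposition \ref{displaceSmallChekanovToriProp}.
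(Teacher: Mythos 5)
Your proposal is correct and follows essentially the same route as the paper: choose $\Gamma\subset B(a+\epsilon)$ hugging the puncture so that, by \eqref{UpsilonExprEqn}, $\Upsilon_k$ lies in a product with one small factor $B^2(a+\epsilon)$ (the paper records this as $\Upsilon_k\subset\mathbb{C}^2\times B(a+\epsilon)$ via the third coordinate; your second coordinate works equally well since both have modulus $\lvert z\rvert$), and then displace that factor at energy $a+\epsilon\searrow a$. The only cosmetic differences are that the argument applies verbatim to every $a\in\big[\frac{\pi}{k+1},\frac{\pi}{k}\big)$, not just the monotone value, and that in the unbounded ambient $\mathbb{C}^3$ there is no outer ball constraining the support of the planar isotopy, so the cut-off bookkeeping you import from Proposition \ref{displaceSmallChekanovToriProp} is not actually needed here.
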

\begin{proof}
Fix $\frac{\pi}{k}-a>\epsilon>0$. Choose $\Gamma\subset B^*\left(\frac{\pi}{k}\right)$ to be contained in $B(a+\epsilon)$. By \eqref{UpsilonExprEqn}, we have $\Upsilon_k\subset\mathbb{C}^2\times B(a+\epsilon)$. Thus, $e(\Upsilon_k)\le a+\epsilon\searrow a$.
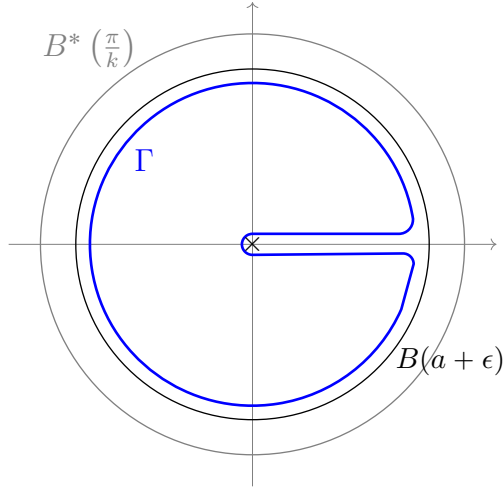
\begin{figure}[H]
\centering
\begin{tikzpicture}[x=0.93cm,y=0.93cm]
% Configurable parameters
\def\gap{0.3}
\def\outerradius{3}
\def\smallepsilon{0.2}
\def\bigradius{2.3}
\def\littleradius{0.15}

% Axes
\draw [help lines,->] (-1.15*\outerradius, 0) -- (1.15*\outerradius,0);
\draw [help lines,->] (0, -1.15*\outerradius) -- (0, 1.15*\outerradius);
  \draw[blue,line width=1pt]
  let
     \n1 = {asin(\gap/2/\bigradius)},
     \n2 = {asin(\gap/2/\littleradius)}
  in (5+\n1:\bigradius) arc (5+\n1:340-\n1:\bigradius)
  -- (\bigradius-0.02,-\littleradius-0.13) arc (0:90:0.15)
  -- (-\n2:\littleradius) arc (-\n2:-360+\n2:\littleradius)
  -- (\bigradius-0.23,\littleradius) arc (270:360:0.2);

\draw[line width=0.5pt]
  (0:\bigradius+\smallepsilon) arc (0:360:\bigradius+\smallepsilon)
  -- cycle;

\draw[gray, line width=0.5pt]
  (0:\outerradius) arc (0:360:\outerradius)
  -- cycle;

% The labels
\node at (-1.53,1.2) {\color{blue}$\Gamma$};
\node at (2.8,-1.66) {\small$B(a+\epsilon)$};
\node at (-2.3,2.8) {\color{gray}$B^*\left(\frac{\pi}{k}\right)$};
\node at (0,0) {$\times$};
\end{tikzpicture}
    \caption{An $\epsilon$-optimized \text{\color{blue}curve} in the \text{\color{gray}reduced space}.}
\end{figure}
\end{proof}

The idea of this proof is to optimize the choice of the curve that we lift from the reduced space, so that $\Upsilon_k$ would be as narrow as possible as a subset of $\mathbb{C}^3$ (regardless of it coming from a symplectic reduction). This has a nice geometric interpretation in the moment polytope of $\mathbb{C}^3$. The torus $\Upsilon_k$ is a lift of a segment inside a line in the moment polytope \[\ell_k\coloneqq\mu(\nu_k^{-1}(\pi,0))\subset(\mathbb{R}_{\ge0})^3,\] on the preimage of which symplectic reduction is preformed. Translating this segment towards the end of this ambient line which is on the $x_1$ axis, corresponds to the described optimization.

\begin{figure}[h]
    % First TikZ picture with caption
    \begin{minipage}{0.45\textwidth}
        \centering
        \begin{tikzpicture}[x=0.5cm,y=0.5cm,z=0.3cm,>=stealth]
% The axes
\draw[->] (xyz cs:x=0) -- (xyz cs:x=10.5) node[above] {$x_1$};
\draw[->] (xyz cs:y=0) -- (xyz cs:y=7.5) node[right] {$x_3$};
\draw[->] (xyz cs:z=0) -- (xyz cs:z=7.5) node[above] {$x_2$};

 % 3D line from (x1, y1, z1) to (x2, y2, z2)
\draw[dashed] (xyz cs:x=9, y=0, z=0) -- (xyz cs:x=0, y=3, z=3) 
    node[above right] {$\ell_k$};

\fill (xyz cs:x=9, y=0, z=0) circle (1pt);

\fill (xyz cs:x=0, y=3, z=3) circle (1pt);

\draw[red, thick] (xyz cs:x=6.3, y=0.9, z=0.9) -- (xyz cs:x=1.5, y=2.5, z=2.5);
\end{tikzpicture}
        \captionof{figure}{Any \text{\color{red}segment} confined to the line.}
    \end{minipage}
    \hfill
    % Second TikZ picture with caption
    \begin{minipage}{0.45\textwidth}
        \centering
\begin{tikzpicture}[x=0.5cm,y=0.5cm,z=0.3cm,>=stealth]
% The axes
\draw[->] (xyz cs:x=0) -- (xyz cs:x=10.5) node[above] {$x_1$};
\draw[->] (xyz cs:y=0) -- (xyz cs:y=7.5) node[right] {$x_3$};
\draw[->] (xyz cs:z=0) -- (xyz cs:z=7.5) node[above] {$x_2$};

\draw[dashed] (xyz cs:x=9, y=0, z=0) -- (xyz cs:x=0, y=3, z=3) 
    node[above right] {$\ell_k$};    

\fill (xyz cs:x=9, y=0, z=0) circle (1pt);

\fill (xyz cs:x=0, y=3, z=3) circle (1pt);

\draw[red, thick] (xyz cs:x=8.7, y=0.1, z=0.1) -- (xyz cs:x=4.2, y=1.6, z=1.6);
\end{tikzpicture}
        \captionof{figure}{An $\epsilon$-optimized \text{\color{red}segment} on the line.}
    \end{minipage}
\end{figure}

\begin{prop}\label{BrendelRelativeHomologyProp}
Take the closed disk $D\subset\mathbb{C}$ with $\partial D=\Gamma$. In particular, $0\notin D$. Then the following homology classes span $H_2(\mathbb{C}^3,\Upsilon_k)$: \[w\in D\xmapsto{\alpha}\left(\sqrt{1-k\lvert w\rvert^2},\lvert w\rvert,w\right),\] and \[z\in\overline{B}(\pi)\xmapsto{\beta_1}\left(\sqrt{1-k\lvert w_0\rvert^2}z,\lvert w_0\rvert,w_0z^k\right),\] and \[z\in\overline{B}(\pi)\xmapsto{\beta_2}\left(\sqrt{1-k\lvert w_0\rvert^2},\lvert w_0\rvert z,w_0\bar{z}\right)\] for fixed $w_0\in\Gamma$. Additionally,
\[
\begin{tabular}{|c||c|c|} 
 \hline
 class & $\omega$ & $\mu$ \\
 \hline
 $\alpha$ & $a$ & 2 \\
 $\beta_1$ & $\pi$ & $2k+2$ \\
 $\beta_2$ & 0 & 0 \\
 \hline
\end{tabular}
\]
\end{prop}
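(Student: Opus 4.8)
The plan is to treat the three assertions separately: that the listed classes span $H_2(\mathbb{C}^3,\Upsilon_k)$, and that the two columns of the table are correct. The spanning statement is homological; the $\omega$-column is a direct integration; the $\mu$-column is a winding-number computation of the Lagrangian phase. Throughout I write $F(\theta_1,\theta_2,s)$ for the parametrization in \eqref{UpsilonExprEqn}, with $s$ parametrizing the curve $\Gamma$, and $Z_1,Z_2,Z_3$ for the three coordinates.

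\textbf{Spanning.} Since $\mathbb{C}^3$ is contractible, the long exact sequence of the pair collapses to an isomorphism $\partial\colon H_2(\mathbb{C}^3,\Upsilon_k)\xrightarrow{\ \sim\ }H_1(\Upsilon_k)\cong\mathbb{Z}^3$. I would fix the basis $e_1,e_2,e_3$ of $H_1(\Upsilon_k)$ coming from \eqref{UpsilonExprEqn}, namely the loops obtained by letting $\theta_1$, $\theta_2$, respectively $z\in\Gamma$, run once with the other two held fixed. It then suffices to compute the boundaries. First one checks the representatives are well defined: on the domain $D$ of $\alpha$ one has $0<|w|<1/\sqrt k$ because $\Gamma$ is contractible in $B^*(\pi/k)$, so $\sqrt{1-k|w|^2}$ is real and positive, while the domains of $\beta_1,\beta_2$ cause no trouble since $w_0$ is fixed. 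Restricting each map to its boundary circle and matching against \eqref{UpsilonExprEqn} gives $\partial\alpha=e_3$ (here $\theta_1=\theta_2=0$, $z=w$), $\partial\beta_1=e_1$ (writing $z=e^{2\pi it}$ forces $\theta_1=t$, $\theta_2=0$, $z=w_0$), and $\partial\beta_2=e_2$ (the same with the two angles interchanged). Thus $\alpha,\beta_1,\beta_2$ map to a basis, hence span.

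\textbf{Areas.} Pulling back $\omega_0=\sum_j dx_j\wedge dy_j$ under each representative and integrating gives the claim. For $\alpha$ the first two coordinates $Z_1=\sqrt{1-k|w|^2}$ and $Z_2=|w|$ are real on $D$, so only the third coordinate $Z_3=w$ contributes, and $\int_D\alpha^*\omega_0$ equals the area enclosed by $\Gamma$, i.e. $a$. For $\beta_1$ the middle coordinate is constant, the two surviving terms integrate to $2\pi\big(\tfrac{1-k\rho_0^2}{2}+\tfrac{k\rho_0^2}{2}\big)=\pi$ (with $\rho_0=|w_0|$), the cross terms cancelling exactly. For $\beta_2$ the holomorphic and anti-holomorphic contributions of the two moving coordinates cancel term-by-term, giving $\omega(\beta_2)=0$.

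\textbf{Maslov indices.} I would use the standard Lagrangian-phase description: for $u\colon(D,\partial D)\to(\mathbb{C}^3,\Upsilon_k)$, $\mu(u)$ is the degree along $\partial D$ of the squared complex phase $t\mapsto\big(\det_{\mathbb{C}}\mathrm{fr}(t)\big)^2$, where $\mathrm{fr}(t)$ is any real frame of $T_{u(t)}\Upsilon_k$; this is well defined since a real basis of a Lagrangian plane is automatically a $\mathbb{C}$-basis of $\mathbb{C}^3$. Using the coordinate frame $(\partial_{\theta_1}F,\partial_{\theta_2}F,\partial_sF)$, a $3\times3$ determinant computation yields
\[
\det{}_{\mathbb{C}}\mathrm{fr}=-4\pi^2\big(Z_1Z_2\,\partial_sZ_3+Z_1Z_3\,\partial_sZ_2-kZ_2Z_3\,\partial_sZ_1\big).
\]
Along $\partial\beta_1$ only $\theta_1$ varies, and every term acquires the common factor $e^{2\pi i(k+1)t}$ while the remaining bracket is constant, so the phase winds $k+1$ times and $\mu(\beta_1)=2k+2$; along $\partial\beta_2$ only $\theta_2$ varies and the exponentials cancel in each term, so the determinant is constant and $\mu(\beta_2)=0$. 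The hard part is $\alpha$: along $\partial\alpha$ the reduced variable $w=\rho e^{i\psi}$ moves, and the determinant factors as $e^{i\psi}(P+iQ)$ with $P$ a positive multiple of $\rho'$ and $Q$ a positive multiple of $\psi'$ (positivity uses $k\ge2$ and $|w|^2<1/k$). Since $\Gamma$ is contractible, $\psi$ has zero net change, so the winding of $\det_{\mathbb{C}}\mathrm{fr}$ equals that of $(\rho',\psi')$, which is the rotation index of $\Gamma$ read in the orientation-preserving polar chart, namely $+1$; hence $\mu(\alpha)=2$. I expect this turning-number step to be the main obstacle, since it is the only place where the geometry of $\Gamma$ (rather than a coordinate circle) enters, and one must argue both that the positive coordinatewise rescaling $(\rho',\psi')\mapsto(P,Q)$ preserves the winding number and that the boundary orientation forces the index to be $+1$ rather than $-1$. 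As a consistency check, in the monotone case $a=\pi/(k+1)$ all three classes satisfy $\omega/\mu=\pi/\big(2(k+1)\big)$.
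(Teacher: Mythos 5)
Your proposal is correct: the paper offers no details here (it simply declares the proposition ``a straightforward computation''), and your argument—boundary isomorphism from the long exact sequence of the pair, direct integration of $\omega_0$, and the winding of $\det_{\mathbb{C}}^2$ of a coordinate frame—is exactly the intended computation, carried out correctly. The one step you flag as delicate, $\mu(\alpha)=2$, does go through as you sketch: the positive coordinatewise rescaling $(\rho',\psi')\mapsto(P,Q)$ is a homotopy through nonvanishing vector fields along the (regular) curve, and the Umlaufsatz applied to the embedded lift of $\Gamma$ in the orientation-preserving polar chart gives turning number $+1$ for the boundary orientation of $D$.
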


The proof is a straightforward computation. In particular, $\Upsilon_k$ is monotone if there exists $\kappa>0$ s.t \[a=\omega(\alpha)=\kappa\mu(\alpha)=2\kappa\] and \[\pi=\omega(\beta_1)=\kappa\mu(\beta_1)=2\kappa(k+1),\] which means $a=\frac{\pi}{k+1}$.

\begin{corollary}
    $e\left(\Upsilon_k\left(\frac{\pi}{k+1}\right)\right)=\hbar\left(\Upsilon_k\left(\frac{\pi}{k+1}\right)\right)=\frac{\pi}{k+1}$
\end{corollary}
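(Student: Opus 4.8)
The plan is to establish the two equalities by sandwiching both $\hbar$ and $e$ between the single value $\frac{\pi}{k+1}$. Setting $a=\frac{\pi}{k+1}$, Proposition \ref{dispEnergyUpsilonProp} immediately gives the upper bound $e\!\left(\Upsilon_k\!\left(\frac{\pi}{k+1}\right)\right)\le\frac{\pi}{k+1}$, and Chekanov's inequality \eqref{ChekanovsIneqEqn} gives $\hbar\le e$. It therefore remains only to prove the lower bound $\hbar\!\left(\Upsilon_k\!\left(\frac{\pi}{k+1}\right)\right)\ge\frac{\pi}{k+1}$, after which the chain $\frac{\pi}{k+1}\le\hbar\le e\le\frac{\pi}{k+1}$ forces equality throughout.

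For the lower bound on $\hbar$, I would first dispose of the sphere term: since $H_2(\mathbb{C}^3)=0$ there are no non-constant pseudo-holomorphic spheres in $\mathbb{C}^3$, so $\invS(\mathbb{C}^3)=\infty$ and hence $\hbar=\invH$. The task reduces to bounding $\invH(L,J)$ from below. Here I invoke the data of Proposition \ref{BrendelRelativeHomologyProp}: the classes $\alpha,\beta_1,\beta_2$ span $H_2(\mathbb{C}^3,\Upsilon_k)$, their Maslov indices $2,\,2k+2,\,0$ are all even, so $\mu$ takes only even values on $H_2$; and in the monotone case $a=\frac{\pi}{k+1}$ the computation in the table shows that the area and Maslov classes are proportional on all of $H_2(\mathbb{C}^3,\Upsilon_k)$, namely $\omega=\frac{\pi}{2(k+1)}\,\mu$.

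The lower bound then drops out of positivity of energy. For any tame $J$, a non-constant $J$-holomorphic disk represents a class $A\in H_2(\mathbb{C}^3,\Upsilon_k)$ with $\omega(A)>0$; by proportionality $\mu(A)>0$, and since $\mu(A)$ is a positive even integer we get $\mu(A)\ge 2$, whence $\omega(A)=\frac{\pi}{2(k+1)}\mu(A)\ge\frac{\pi}{k+1}$. As this holds for every non-constant disk and every tame $J$, we obtain $\invH(L,J)\ge\frac{\pi}{k+1}$ for all $J$, so $\hbar(L)=\invH(L)=\sup_{J}\invH(L,J)\ge\frac{\pi}{k+1}$, completing the sandwich.

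I do not expect a genuine obstacle at this stage: the analytic content has already been absorbed into Propositions \ref{dispEnergyUpsilonProp} and \ref{BrendelRelativeHomologyProp}, and the corollary is essentially a formal assembly of an upper bound on $e$, Chekanov's inequality, and a monotonicity bound on $\hbar$. The only points demanding care are verifying that the sphere term $\invS$ genuinely drops out in $\mathbb{C}^3$ and that the minimal Maslov number is $2$ rather than $1$ — both immediate from the homology computation — since it is precisely the evenness and positivity of $\mu$, combined with monotonicity, that upgrade the trivial inequality $\omega(A)>0$ to the sharp area bound $2\kappa=\frac{\pi}{k+1}$.
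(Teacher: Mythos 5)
Your proposal is correct and follows essentially the same route as the paper: the upper bound from Proposition \ref{dispEnergyUpsilonProp}, Chekanov's inequality \eqref{ChekanovsIneqEqn}, and a lower bound on disk areas read off from Proposition \ref{BrendelRelativeHomologyProp}. The only cosmetic difference is that you derive the lower bound via monotonicity and the evenness of the Maslov class, whereas the paper observes directly that $\omega(H_2(\mathbb{C}^3,\Upsilon_k))=\mathbb{Z}\left[\frac{\pi}{k+1}\right]$, so every positive-area class has area at least $\frac{\pi}{k+1}$ --- the same data from the same table.
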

\begin{proof}
    Proposition \ref{dispEnergyUpsilonProp} gives the upper bound; Chekanov's theorem gives $\hbar\le e$; and by Proposition \ref{BrendelRelativeHomologyProp}, the monotone $\Upsilon_k$ has $\omega(H_2(\mathbb{C}^3,\Upsilon_k))=\mathbb{Z}\left[\frac{\pi}{k+1}\right]$. This gives the lower bound.
\end{proof}

\subsection{Holomorphic disks in \texorpdfstring{$\mathbb{C}^3$}{ℂ³}}\label{holoDiskUpsilonSect}

\subsubsection{\texorpdfstring{$\hbar$}{ℏ} in the non-monotone case}\label{nonMonUpsilonHbarSubsect}

In this subsection we find the classes in $H_2(\mathbb{C}^3,\Upsilon_k)$ which are realized by $J_0$-holomorphic disks, without assuming monotonicity. The method we apply, of finding such classes by applying positivity of intersections, originates in the analysis of holomorphic disks on the Clifford torus in \cite[Theorem 9.1]{cho2003holomorphicdiscspinstructures}, which was adapted to the case of the Chekanov torus in \cite[Section 5]{auroux2007mirrorsymmetrytdualitycomplement} and was since used also in \cite{chekanov2010notes,Vianna_2014,Oakley_2016} among others. During our calculation, we consider intersection of such holomorphic disks with a certain complex variety (denoted $\Sigma_F\subset\mathbb{C}^3$ below), whose construction is inspired by the construction of the Chekanov torus in \cite{knot-survey,auroux2007mirrorsymmetrytdualitycomplement}.

Define $F\colon\mathbb{C}^*\times\mathbb{C}^2\to\mathbb{C}$ by $(z_1,z_2,z_3)\mapsto z_1^{-k}z_2z_3$. It is clearly invariant under the $T^2$-action \eqref{brendelT2actionEqn}.

\begin{prop}\label{possibleClsUpsilonProp}
    The only classes in $H_2(\mathbb{C}^3,\Upsilon_k)$ which may contain holomorphic discs of Maslov index $2$ are $\alpha$ and $\beta_1-k\alpha+n\beta_2$ for $0\le n\le k$.
\end{prop}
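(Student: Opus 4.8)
The plan is to bound the coefficients of a Maslov-$2$ class by positivity of intersection with complex hypersurfaces disjoint from $\Upsilon_k$, using only the generators $\alpha,\beta_1,\beta_2$ and their intersection data. Write a general class as $A=p\alpha+q\beta_1+r\beta_2$ with $p,q,r\in\mathbb{Z}$. By the table in Proposition \ref{BrendelRelativeHomologyProp}, $\mu(A)=2p+(2k+2)q$, so the Maslov-$2$ condition is the single linear relation $p+(k+1)q=1$; I would carry this along and substitute $p=1-(k+1)q$ only at the end. Since $J_0$-holomorphic disks are genuine complex curves and the divisors below are complex hypersurfaces, positivity of intersections is elementary (via the argument principle), and no regularity input is needed.

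First I would note that the three coordinate hyperplanes $\{z_i=0\}$ are disjoint from $\Upsilon_k$: on $\Upsilon_k$ one has $|z_1|=\sqrt{1-k|z|^2}>0$ because $|z|<1/\sqrt{k}$ on $B^*(\pi/k)$, while $|z_2|=|z_3|=|z|>0$ because $0\notin\Gamma$. As the boundary of any holomorphic disk lies on $\Upsilon_k$, no such disk can be contained in a $\{z_i=0\}$, so positivity applies. Let $e_1,e_2,e_3\in H_1(\Upsilon_k)$ be the loops in which $\theta_1$, $\theta_2$, and $z$ (around $\Gamma$) respectively make one turn, so that $\partial\beta_1=e_1$, $\partial\beta_2=e_2$, $\partial\alpha=e_3$. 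Reading off from the explicit disk representatives (or as winding numbers of $z_i$ along these loops) gives $A\cdot\{z_1=0\}=q$, $A\cdot\{z_2=0\}=r$, and $A\cdot\{z_3=0\}=qk-r$. Positivity then yields $q\ge0$, $r\ge0$, and $r\le qk$.

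These inequalities leave $q$ unbounded above, so the crux is the variety $\Sigma_F:=\{z_2z_3=c\,z_1^{\,k}\}=F^{-1}(c)$ for a carefully chosen $c$. On $\Upsilon_k$ the phases cancel and one computes $z_2z_3-c\,z_1^{\,k}=e^{2\pi i k\theta_1}(1-k|z|^2)^{k/2}\bigl(F|_{\Upsilon_k}(z)-c\bigr)$, where $F|_{\Upsilon_k}$ depends only on $z$ and is the radial reparametrization $z\mapsto(1-k|z|^2)^{-k/2}|z|z$ of $\Gamma$. Choosing $c$ in the bounded region enclosed by the loop $F(\Gamma)$ makes $\Sigma_F$ disjoint from $\Upsilon_k$ and forces the winding number of $F(\Gamma)$ about $c$ to be $+1$. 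From the factorization the contributions along $e_1,e_2,e_3$ are $k,0,+1$, weighted by $q,r,p$, so $A\cdot\Sigma_F=qk+p$. Again no disk lies in $\Sigma_F$, so positivity gives $qk+p\ge0$; substituting $p=1-(k+1)q$ collapses this to $1-q\ge0$, i.e.\ $q\le1$. With $q\ge0$ this pins $q\in\{0,1\}$, and the earlier inequalities then force $r=0$ when $q=0$ (the class $\alpha$) and $0\le r\le k$ when $q=1$ (the classes $\beta_1-k\alpha+r\beta_2$), which is the assertion with $n=r$.

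The main obstacle is the intersection computation with $\Sigma_F$: verifying that $\Sigma_F$ can be made disjoint from $\Upsilon_k$, carrying out the winding-number computation through the factorization above, and—most delicately—fixing the sign of the $e_3$-contribution, since the wrong sign would produce a spurious bound excluding $\alpha$ instead of the high-$q$ classes. I would settle the sign by orienting $\Gamma$ as $\partial D$ with the complex orientation (so that $\omega(\alpha)=a>0$) and observing that $F|_{\Upsilon_k}$ is an orientation-preserving radial diffeomorphism of the punctured disk, hence sends the region enclosed by $\Gamma$ to that enclosed by $F(\Gamma)$; this gives winding $+1$ and the decisive inequality $q\le1$.
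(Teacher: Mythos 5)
Your proposal is correct and follows essentially the same route as the paper: positivity of intersections with the coordinate hyperplanes and with the auxiliary variety $\Sigma_F=\{z_2z_3=c\,z_1^k\}$ for $c$ chosen in the image of the disk bounded by $\Gamma$, yielding exactly the constraints $q\in\{0,1\}$ and $0\le r\le qk$. The only differences are bookkeeping: the paper perturbs the $z_1$-$z_2$ plane and works with explicit representatives, while you compute the same intersection numbers as boundary winding numbers via the factorization of $z_2z_3-c\,z_1^k$ on $\Upsilon_k$, which also handles the sign and disjointness issues cleanly.
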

\begin{proof}
For a proper holomorphic complex hypersurface $\Sigma$ in $\mathbb{C}^3$ disjoint from $\Upsilon_k$, and an oriented surface $Y\subset\mathbb{C}^3$ with boundary on $\Upsilon_k$, the intersection index $\Sigma\cdot Y$ is well defined. The three holomorphic complex hypersurfaces in the top row of the table below are indeed disjoint from $\Upsilon_k$. The table presents the intersection index of these hypersurfaces with the cycles $\alpha$, $\beta_1$, $\beta_2$; the rightmost column of the table gives the values of the Maslov class for these disks.
\[
\begin{tabular}{|c||c|c|c||c|} 
 \hline
 class & $z_1$-$z_3$ plane & $\Sigma_{12}$ & $\Sigma_F$ & $\mu$ \\
 \hline
 $\alpha$ & 0 & 0 & 1 & 2 \\
 $\beta_1$ & 0 & $k$ & $k$ & $2k+2$ \\
 $\beta_2$ & 1 & $-1$ & 0 & 0 \\
 \hline
\end{tabular}
\]
Here, $\Sigma_{12}$ is the $z_1$-$z_2$ plane, perturbed near the origin to become transverse to $\beta_1$ and remain disjoint from $\alpha$.
Additionally, \[\Sigma_F\coloneqq\scomp{(z_1,z_2,z_3)\in\mathbb{C}^3}{\varepsilon z_1^k=z_2z_3}=F^{-1}(\varepsilon)\cup\scomp{(0,z,0),(0,0,z)}{z\in\mathbb{C}}\] is a proper complex subvariety of $\mathbb{C}^3$, where \begin{equation*}
    \varepsilon\coloneqq F(q^{-1}(\epsilon))\in\mathbb{C},
\end{equation*}
($F$ is constant on $q$'s fibers which are the $T^2$-orbits) and $\epsilon\in\interior{D}$, which isn't real proportional to $w_0$ (this choice ensures that $\Sigma_F$ is transverse to $\beta_2$).

A class of Maslov index $2$ is of the form \[\alpha+m(\beta_1-(k+1)\alpha)+n\beta_2\] for $m,\,n\in\mathbb{Z}$. Assuming that this class contains a holomorphic curve, we have constraints on $m$ and $n$ coming from positivity of intersections. Considering the intersection number with the planes, we must have $0\le n\le mk$; and considering the intersection number with $\Sigma_F$, we must have $m\le1$. It follows that the only possibilities are $m=n=0$ and $m=1,\,0\le n\le k$.
\end{proof}

\begin{corollary}\label{UpsilonHlowerbdCor}
    $\hbar(\Upsilon_k)=\pi-ka$.
\end{corollary}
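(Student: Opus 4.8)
The plan is to compute $\invH(\Upsilon_k)$ directly and note that $\hbar(\Upsilon_k)=\invH(\Upsilon_k)$: because $H_2(\mathbb{C}^3)=0$ and $\omega$ is exact, there are no non-constant holomorphic spheres, so $\invS(\mathbb{C}^3)=\infty$ and the sphere term never competes. Thus it suffices to prove the two-sided bound $\invH(\Upsilon_k)=\sup_{J}\invH(\Upsilon_k,J)=\pi-ka$.

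For the lower bound I would use only the standard structure $J_0$. Running the positivity-of-intersections argument from the proof of Proposition \ref{possibleClsUpsilonProp}, but now against \emph{every} class $A=p\alpha+q\beta_1+r\beta_2\in H_2(\mathbb{C}^3,\Upsilon_k)$ rather than only the Maslov-$2$ ones, the three hypersurfaces force $r\ge 0$, $qk-r\ge 0$, and $p+qk\ge 0$ for any class carrying a non-constant $J_0$-disk (no such disk can lie inside a hypersurface, since its boundary meets $\Upsilon_k$, which is disjoint from all three). Minimising $\omega(A)=pa+q\pi$ subject to these inequalities and $\omega(A)>0$ is an elementary integer program: $q\ge 0$ is forced, $q=0$ gives $\omega\ge a$, $q=1$ gives $\omega\ge\pi-ka$ (attained by $\beta_1-k\alpha$), and $q\ge 2$ gives $\omega\ge q(\pi-ka)$. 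Since $a\ge\frac{\pi}{k+1}$ forces $\pi-ka\le a$, the minimum is exactly $\pi-ka$. Hence $\invH(\Upsilon_k,J_0)\ge\pi-ka$, and since the class $\beta_1-k\alpha$ carries a $J_0$-holomorphic disk of area $\pi-ka$, in fact $\invH(\Upsilon_k,J_0)=\pi-ka$; in particular $\hbar(\Upsilon_k)\ge\pi-ka$.

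The upper bound $\invH(\Upsilon_k,J)\le\pi-ka$ for \emph{all} tame $J$ is where the real work lies, and Chekanov's inequality is of no help: it only yields $\hbar\le e\le a$ (Proposition \ref{dispEnergyUpsilonProp}), which for a non-monotone member is strictly weaker than $\pi-ka$. Following the template of Example \ref{hLTeFntExamp}, I would instead invoke lower semi-continuity of $\hbar$ (Proposition \ref{hbarLSCprop}), which applies since $S=\infty$ here. Concretely, I would produce a $C^\infty$-convergent sequence $L_m\to\Upsilon_k(a)$ — obtained as flux deformations inside a Weinstein neighbourhood, equivalently nearby lifts in the reduced-space picture — for which $\hbar(L_m)$ is computable and tends to $\pi-ka$, so that lower semi-continuity gives $\hbar(\Upsilon_k(a))\le\liminf_m\hbar(L_m)=\pi-ka$. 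An equivalent and more self-contained framing is that the set $\scomp{J}{\invH(\Upsilon_k,J)\le\pi-ka}$ is closed (by Gromov compactness, no spheres absorbing area) and contains $J_0$; if it is also open then it is all of $\mathcal J_\omega$ by connectedness, and the upper bound follows.

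I expect the crux to be exactly this persistence of a minimal disk as $J$ varies, equivalently openness of the set above, equivalently the computation of $\hbar$ for the approximating Lagrangians. The difficulty is that $\Upsilon_k(a)$ is non-monotone, so the Maslov-$2$ count in $\beta_1-k\alpha$ need not be $J$-independent: a Maslov-$0$ disk could bubble off. Two features make me optimistic about pushing this through. First, every Maslov-$0$ class with positive area satisfies $\omega\ge(k+1)a-\pi>0$, and bubbling strictly \emph{decreases} the area of the surviving disk component, so a degeneration can never raise the minimal area above $\pi-ka$; a Gromov limit of disks of area $\le\pi-ka$ always returns a non-constant disk of area $\le\pi-ka$. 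Second, these bubbles therefore at worst produce an even smaller disk, which only helps the upper bound. What remains — and is the genuine obstacle — is the transversality/regularity needed to guarantee a nearby disk persists at every step, so that the closed set above is also open (or, in the lower-semi-continuity formulation, to pin down $\hbar$ for the approximating family rather than merely bound it by $e$).
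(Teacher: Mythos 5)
Your lower bound is essentially the paper's: run positivity of intersections against the three hypersurfaces from Proposition \ref{possibleClsUpsilonProp} for arbitrary classes rather than only Maslov-$2$ ones, minimize the area over the resulting cone of admissible classes, and conclude $\invH(\Upsilon_k,J_0)\ge\pi-ka$, hence $\invH(\Upsilon_k)\ge\pi-ka$ by taking the supremum over $J$; the integer program you set up matches the paper's. (Your parenthetical claim that $\beta_1-k\alpha$ actually \emph{carries} a $J_0$-disk of area $\pi-ka$ is not established anywhere --- Proposition \ref{possibleClsUpsilonProp} only lists which classes \emph{may} contain disks --- but nothing in the corollary needs it.) You also correctly identify that the upper bound is the real content, that Chekanov's inequality only gives $\hbar\le e\le a$, and that Proposition \ref{hbarLSCprop} is the right tool.

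The gap is exactly where you flag it, and it is genuine: you never produce an approximating family whose $\hbar$ you can actually compute. ``Flux deformations inside a Weinstein neighbourhood, equivalently nearby lifts in the reduced-space picture'' does not work as stated: lifting nearby curves from the \emph{same} punctured reduced space $B^*\left(\frac{\pi}{k}\right)$ just produces more tori of the form $\Upsilon_k(a')$, whose $\hbar$ is precisely what you are trying to determine, so the argument is circular. The paper's key idea, which is missing from your proposal, is to perturb the \emph{level set}: for $t>0$ the reduced space of $\nu_k^{-1}(\pi,t)$ is a disk with an orbifold point but no puncture, so the lifted tori $L_t$ are no longer knotted --- by Brendel's versal deformation computation they converge in $C^\infty$ to $\Upsilon_k(a)$ as $t\searrow0$ and are Hamiltonian isotopic to the product tori $T(\pi-ka,a+t,a)$, for which $\hbar=\min\{\pi-ka,a+t,a\}=\pi-ka$ is known (using $a\ge\frac{\pi}{k+1}$). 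Proposition \ref{hbarLSCprop} then gives the upper bound with no transversality input whatsoever. Your alternative open--closed formulation founders on exactly the openness/gluing statement you acknowledge you cannot supply, which for a non-monotone torus is not available; the point of the semi-continuity route is that it sidesteps this entirely, provided one chooses approximating Lagrangians that are \emph{standard} rather than merely nearby.
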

\begin{proof}
A class of Maslov index $2\ell$ is of the form \[\ell\alpha+m(\beta_1-(k+1)\alpha)+n\beta_2\] for $\ell,m,\,n\in\mathbb{Z}$. Assuming that this class contains a holomorphic disk, we have constraints on $\ell,\,m$ and $n$ coming from positivity of intersections. Considering the intersection number with the planes, we must have $0\le n\le mk$; and considering the intersection number with $\Sigma_F$, we must have $m\le\ell$.

This class has area \[\pi m+(\ell-mk-m)a\ge\min\{\ell a,m(\pi-ka)\}\ge\pi-ka,\] since $\ell\ge m\ge0$. Thus $\pi-ka\le\invH(\Upsilon_k,J_0)\le\invH(\Upsilon_k)$.

Finally, we apply Proposition \ref{hbarLSCprop} to bound $\hbar(\Upsilon_k)$ from above. The Lagrangians converging to our Lagrangian knot $\Upsilon_k$, come from its \emph{versal deformation} as computed in \cite[Proposition 4.4]{brendel2023local}.

Let $0<t\le1$. The $T^2$-quotient of the level set $\nu_k^{-1}(\pi,t)\subset\mathbb{C}^3$ yields a reduced space $\Sigma_t$ with an orbifold point and no puncture, as opposed to the reduced space in the construction of $\Upsilon_k$. Denote by $L_t\subset\nu_k^{-1}(\pi,t)$ the lift of a curve in $\Sigma_t$ bounding area $a$. By the above-mentioned versal deformation computation, the Lagrangian torus $L_t$ comes arbitrarily $C^\infty$-close to $\Upsilon_k$ as $t\searrow0$. Furthermore, $L_t$ can be Hamiltonianly isotoped to the product torus $T(\pi-ka,a+t,a)$. Thus, $\lim_{t\to0}\hbar(L_t)=\pi-ka$, and $\hbar(\Upsilon_k)\le\pi-ka$ by Proposition \ref{hbarLSCprop}.
\end{proof}

\begin{remark*}
    The above proof gives evidence that the $\Upsilon_k$-s, including the non-monotone ones, are \emph{positive Lagrangian submanifolds} in the sense of \cite[Section 3.2]{Cho_2008} (except that Cho considers Lagrangians in a \emph{compact} symplectic manifold). This means that for some regular compatible almost complex structure $J$, all non-constant pseudo-holomorphic disks have positive Maslov index and that the evaluation map \eqref{evMapEqn} is a submersion for classes of pseudo-holomorphic disks with Maslov index $2$.
\end{remark*}

The following two subsections contain corollaries of this subsection.

\subsubsection{Classification of Brendel tori}

In answer to \cite[Remark 1.1]{brendel2023local}, we have the following Corollary. 
\begin{corollary}[Classification up to symplectomorphism of Brendel tori]\label{upsilonClassificCor}
    No two $\Upsilon_k(a)$ with distinct parameters are symplectomorphic.
\end{corollary}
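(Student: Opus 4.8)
The plan is to recover the two parameters $k$ and $a$ from symplectic invariants of the pair $(\mathbb{C}^3,\Upsilon_k(a))$: I would read off $a$ (once $k$ is fixed) from $\hbar$, and read off $k$ from Brendel's germ invariant.

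First I would record that $\hbar$ is invariant under symplectomorphisms of the ambient $\mathbb{C}^3$. Indeed, a symplectomorphism $\Phi$ induces a bijection $J\mapsto\Phi_*J$ of the space of tame almost complex structures, carries each $J$-holomorphic disk with boundary on $L$ to a $\Phi_*J$-holomorphic disk with boundary on $\Phi(L)$, and preserves symplectic area; since $H_2(\mathbb{C}^3)=0$ we have $\invS(\mathbb{C}^3)=\infty$ and hence $\hbar=\invH$, so that $\hbar(\Phi(L))=\hbar(L)$. Consequently, if some $\Phi\in\mathrm{Symp}(\mathbb{C}^3)$ satisfies $\Phi(\Upsilon_k(a))=\Upsilon_{k'}(a')$, then Corollary \ref{UpsilonHlowerbdCor} gives $\pi-ka=\pi-k'a'$, that is $ka=k'a'$.

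It then remains to show $k=k'$, for then $ka=ka'$ forces $a=a'$ and we are done. For this I would invoke Brendel's theorem: the displacement-energy germ of $\Upsilon_k$ along the torus is a local invariant depending only on $k$ and not on the choice of lifted curve, hence not on $a$; therefore $\Upsilon_k(a)$ and $\Upsilon_{k'}(a')$ are never symplectomorphic when $k\neq k'$. Combining the two steps yields $(k,a)=(k',a')$.

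The hard part is precisely the input that $k$ is a symplectic invariant of the \emph{entire} family, including the non-monotone members; $\hbar$ by itself cannot detect $k$, since as $a$ runs over $[\tfrac{\pi}{k+1},\tfrac{\pi}{k})$ the quantity $\pi-ka$ sweeps the interval $(0,\tfrac{\pi}{k+1}]$, and these intervals overlap across different $k$. I would obtain the needed input from Brendel's germ computation in \cite{brendel2023local}. A self-contained alternative would be to extract $k$ from the Maslov-index-$2$ disk data of Proposition \ref{possibleClsUpsilonProp}: the classes of minimal area $\pi-ka$ number exactly $k+1$ (respectively $k+2$ in the monotone case $a=\tfrac{\pi}{k+1}$), which determines $k$. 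The obstacle in that route is upgrading this $J_0$-disk count to an honest symplectomorphism invariant for non-monotone tori, which demands regularity of $J_0$ and control over wall-crossing — exactly the subtleties that the $\hbar$-based argument sidesteps.
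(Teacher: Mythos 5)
Your proposal is correct and uses exactly the same two ingredients as the paper's proof: Brendel's Theorem A to rule out $k\neq k'$, and the symplectomorphism invariance of $\hbar(\Upsilon_k(a))=\pi-ka$ (Corollary \ref{UpsilonHlowerbdCor}) to pin down $a$ once $k$ is fixed. The only difference is cosmetic ordering, and your closing remarks about why $\hbar$ alone cannot detect $k$ are accurate but not needed for the argument.
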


\begin{proof}
    By \cite[Theorem A]{brendel2023local}, $\Upsilon_k(a)\ncong\Upsilon_{k'}(a')$ for $k\neq k'$ and any $a,a'$. Fix $k\ge2$ and let $\frac{\pi}{k+1}\le a,a'<\frac{\pi}{k}$. We show that
    \begin{equation}\label{EquivUpsilonsEqn}
    \Upsilon_k(a)\cong\Upsilon_k(a')    
    \end{equation}
    implies $a=a'$. Since $\invH$ is a symplectic invariant, \eqref{EquivUpsilonsEqn} implies \[\pi-ka=\invH(\Upsilon_k(a))=\invH\left(\Upsilon_k\left(a'\right)\right)=\pi-ka'\] and we're done.
\end{proof}

\begin{remark}
The classification result above doesn't require the calculation of $\hbar(\Upsilon_k)$ because it can be proved using the simpler fact that $\Upsilon_k(a)$ is area equivalent to $T(\pi-ka,a,a)\subset\mathbb{C}^3$ and that the Chekanov lattice of area equivalent Lagrangian tori is equal, see \cite[Lemma 2.4]{CS16}.

In fact, by calculating $\hbar(\Upsilon_k)$ we have shown the following stronger classification result. Brendel's original construction is slightly more general than we explained above. Instead of performing symplectic reduction on the level set $\nu_k^{-1}(\pi,0)$, Brendel deals with a general level set $\nu_k^{-1}(a_1,0)$ for arbitrary $a_1>0$. Then, the reduced space has area $\frac{a_1}{k}$ and the rest of the construction goes through similarly where for $a=(a_1,a_2)$ and $a_2\in[\frac{a_1}{k+1},\frac{a_1}{k})$, $\Upsilon_k(a)$ is obtained by lifting a curve enclosing area $a_2$ from the reduced space. In this generality, $\hbar(\Upsilon_k(a))=a_1-ka_2$. A consequence of that, which does not follow from the Chekanov lattice, is that no two $\Upsilon_k(a)$ with distinct parameters are symplectomorphic, even when using the more general notation where $a=(a_1,a_2)$.
\end{remark}

\subsubsection{Correspondence with Auroux's tori}\label{AurouxSection}

Another corollary of the analysis of classes of pseudo-holomorphic disks with boundary on $\Upsilon_k$ is some evidence to correspondence between Brendel's and Auroux's tori.

Let us define the notion of \emph{pseudo-holomorphic disk count}, see \cite[Section 2.2]{Chekanov98} and \cite[Section 3]{auroux2007mirrorsymmetrytdualitycomplement}. Given a \emph{monotone} spin
%(\cite[Definition 3.1.1]{fukAnomalyAndObstruction}) 
Lagrangian $L\subset M$, a regular
%(see \cite[Definition 3.1.5]{MS12} for the case of spheres and \cite[Section 5.2]{akveld2000loopslagrangiansubmanifoldspseudoholomorphic} for the case of disks)
almost complex structure $J$ and a relative class $c\in H_2(M,L)$ of minimal positive Maslov index, consider the moduli space \[\mathcal{M}_1(L,c,J)\coloneqq\scomp{u\colon(B^2,\partial B)\to(M,L)}{\text{$J$-holomorphic, }[u]=c}/\operatorname{Aut}(B,1).\]
It is guaranteed to be compact by $L$'s monotonicity and the minimality of $c$'s Maslov index which imply no bubbling or multiple
covering phenomena can occur. Further by a Riemann-Roch theorem, the moduli space is either empty or a smooth oriented manifold of dimension \[n+\mu(c)-\dim\operatorname{Aut}(B,1)=n+\mu(c)-2.\] The pseudo-holomorphic disc count $n_c(L)\in\mathbb{Z}$ is then the degree of the evaluation map
\begin{equation}\label{evMapEqn}
\begin{aligned}
    \mathcal{M}_1(L,c,J)&\to L \\ [u]&\mapsto u(1).
\end{aligned}
\end{equation}

\begin{fact*}
For $\phi\in\operatorname{Symp}(M)$, we have $n_c(L)=n_{\phi_*c}(\phi(L))$. Also, $n_c(L)$ is independent of regular $J$.
\end{fact*}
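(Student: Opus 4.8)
The plan is to prove the two assertions in the order in which they are logically dependent: first the independence of the count on the regular almost complex structure $J$, and then the symplectomorphism invariance, which will follow by a direct comparison of moduli spaces combined with the first assertion.

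For the independence of $J$, I would run the standard one-parameter cobordism argument. Given two regular tame almost complex structures $J_0,J_1$, choose a generic smooth path $\{J_t\}_{t\in[0,1]}$ joining them and form the parametrized moduli space $\bigcup_{t}\mathcal{M}_1(L,c,J_t)\times\{t\}$. For a generic such path this is a smooth oriented manifold of one dimension higher than each slice, whose orientation is furnished by the spin structure on $L$, and whose only boundary is $\mathcal{M}_1(L,c,J_1)\times\{1\}$ minus $\mathcal{M}_1(L,c,J_0)\times\{0\}$. The evaluation map $[u]\mapsto u(1)$ extends over the whole parametrized space, exhibiting the two boundary evaluation maps as cobordant maps into $L$. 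Since the degree of a map is a cobordism invariant, this gives $n_c(L,J_0)=n_c(L,J_1)$.

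The crux, and the one place where the monotonicity and minimal-Maslov hypotheses are essential, is the compactness of the parametrized moduli space, i.e.\ the absence of boundary coming from bubbling; I expect this to be the main obstacle. By Gromov compactness, a failure of compactness would produce a disk or sphere bubble. A disk bubble would split $c$ as a sum of two nonconstant disk classes, but every nonconstant holomorphic disk has Maslov index at least the minimal Maslov number, which is $2$ here, so the total would be at least $4>2=\mu(c)$, a contradiction. A sphere bubble would contribute a class of Chern number at least the minimal Chern number, and a standard index computation shows such configurations form a subset of codimension at least $2$, so they do not appear in a generic one-parameter family. Hence no bubbling occurs along the path and the parametrized moduli space is compact, as required.

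For the symplectomorphism invariance, fix a regular $J$ for $L$ and set $J'\coloneqq\phi_*J=d\phi\circ J\circ d\phi^{-1}$. Since $\phi$ is a symplectomorphism, $J'$ is again tame, and postcomposition $u\mapsto\phi\circ u$ is a bijection between $J$-holomorphic disks with boundary on $L$ in class $c$ and $J'$-holomorphic disks with boundary on $\phi(L)$ in class $\phi_*c$; this bijection commutes with the $\operatorname{Aut}(B,1)$-action and carries the linearized Cauchy--Riemann operators to one another, so $J'$ is regular for $\phi(L)$ and the induced map $\mathcal{M}_1(L,c,J)\to\mathcal{M}_1(\phi(L),\phi_*c,J')$ is a diffeomorphism, orientation-preserving given compatible spin structures. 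It intertwines the two evaluation maps through the diffeomorphism $\phi|_L\colon L\to\phi(L)$, since $(\phi\circ u)(1)=\phi(u(1))$. The two evaluation maps therefore have equal degree, giving $n_c(L)=n_{\phi_*c}(\phi(L),J')$; and by the $J$-independence already established, the right-hand side equals $n_{\phi_*c}(\phi(L))$, completing the proof.
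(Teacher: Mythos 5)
The paper does not prove this statement at all: it is recorded as an unproved \emph{Fact}, with the reader referred to the cited sources (Chekanov's Section~2.2 and Auroux's Section~3) where the standard arguments appear. Your proposal supplies exactly those standard arguments --- the parametrized-moduli cobordism for $J$-independence, with monotonicity and minimal Maslov number $2$ ruling out disk bubbling and a codimension count ruling out sphere bubbling in generic one-parameter families, followed by pushforward of $J$ and of the spin structure under $\phi$ for the symplectomorphism invariance --- and it is correct, so there is nothing in the paper to diverge from.
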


\begin{theorem*}[\cite{Auroux_monTorR6}]
    For any $n\ge0$, there exists a monotone Lagrangian torus $T_\text{Au}(n)\subset\mathbb{R}^6$ such that there are $n+2$ distinct Maslov $2$ classes in $H_2(\mathbb{R}^6,T_\text{Au}(n))$ for which the pseudo-holomorphic disk count is nonzero. The sum of their counts is $2^n+1$.
\end{theorem*}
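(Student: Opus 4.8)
The plan is to reconstruct Auroux's argument, which runs closely parallel to the analysis of $\Upsilon_k$ in Proposition \ref{possibleClsUpsilonProp} and Corollary \ref{UpsilonHlowerbdCor}: build the torus as a lift over a conic-type fibration, record its relative homology together with its Maslov and area data, cut the admissible Maslov-$2$ classes down by positivity of intersections, and finally enumerate the disks in each surviving class. Fix $n\ge0$. Following Auroux, I would first realize $T_\text{Au}(n)$ as a lift, under a conic-bundle map $f\colon\mathbb{C}^3\to\mathbb{C}$ whose generic fibers are affine conics carrying a degree-$n$ feature, of a contractible loop $\Gamma$ in the base bounding an area tuned to make the lift monotone. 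The target is to arrange the superpotential of the monotone fiber, in monotonicity-normalized Novikov variables $T_1,T_2,T_3$ attached to a basis of Maslov-$2$ generators of $H_2(\mathbb{C}^3,T_\text{Au}(n))$, to take the shape
\[
W = T_1 + T_2(1+T_3)^n .
\]
Monotonicity fixes $\omega=\kappa\mu$ on $H_2$, so each candidate class has area proportional to its Maslov index and the minimal-area holomorphic disks live exactly in Maslov-$2$ classes; this is the same normalization exploited for $\Upsilon_k$.

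Next I would classify the candidate classes. Writing a general Maslov-$2$ class in the above basis and intersecting with a carefully chosen collection of proper complex hypersurfaces disjoint from $T_\text{Au}(n)$ — exactly the device of Proposition \ref{possibleClsUpsilonProp}, where intersection with the coordinate planes and with $\Sigma_F$ pinned down the admissible coefficients — positivity of intersections forces the class to be either $T_1$ or one of the $n+1$ classes represented by $T_2T_3^{\,j}$ with $0\le j\le n$. This already accounts for the $n+2$ classes. This step is essentially bookkeeping once the correct hypersurfaces are identified, and relies on the same monotonicity-plus-positivity package used in the $\Upsilon_k$ computation.

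The main obstacle is the enumerative step: showing the disk count is $1$ for the class $T_1$ and exactly $\binom{n}{j}$ for $T_2T_3^{\,j}$, so that the total is
\[
1 + \sum_{j=0}^{n}\binom{n}{j} = 2^n + 1 .
\]
For $T_1$ this is the familiar ``Chekanov disk'': a fiber disk whose moduli space is a single regular point, giving count $1$. For $T_2T_3^{\,j}$ the count reduces to enumerating holomorphic sections (or, after degenerating the fibration, broken configurations) of the conic bundle over the disk bounded by $\Gamma$ that pass through a prescribed point of $T_\text{Au}(n)$; the degree-$n$ data records, for each section, a choice of $j$ among $n$ possible ``twists,'' which is counted by $\binom{n}{j}$. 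Making this rigorous requires either an explicit parametrization of each moduli space with a regularity and orientation argument (so that the degree of the evaluation map \eqref{evMapEqn} genuinely computes $n_c$, and in particular all contributions carry the same sign, forcing the positive total $2^n$ rather than $(1\pm1)^n$), or equivalently a wall-crossing computation of $W$ identifying the coefficient of $T_2T_3^{\,j}$ with $\binom{n}{j}$. Since, by the Fact above, $n_c(L)$ is independent of the regular $J$ and invariant under symplectomorphism, it suffices to carry out this enumeration in one convenient model. Summing the $n+2$ nonzero counts then yields $2^n+1$, completing the proof.
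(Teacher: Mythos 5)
First, a point of order: the paper does not prove this statement. It is quoted from Auroux \cite{Auroux_monTorR6} as background for Section \ref{AurouxSection}, so there is no in-paper proof to compare against; the closest internal analogue is the $\Upsilon_k$ analysis of Propositions \ref{possibleClsUpsilonProp} and \ref{upsilonHoloDisksIntersectionsProp}, and even there the total count $2^k+1$ is explicitly left as a conjecture.

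As a reconstruction of Auroux's argument your outline has the right skeleton --- a Chekanov-type lift, monotonicity reducing everything to Maslov-$2$ classes, positivity of intersections cutting the candidates down to $n+2$ classes, and an expected superpotential $T_1+T_2(1+T_3)^n$ whose coefficients sum to $2^n+1$ --- but the two load-bearing steps are not actually carried out. (i) The torus is never constructed: ``a conic-bundle map $f\colon\mathbb{C}^3\to\mathbb{C}$ whose generic fibers are affine conics carrying a degree-$n$ feature'' is not a definition (the generic fiber of a map from $\mathbb{C}^3$ to $\mathbb{C}$ is a complex surface, not a conic), and without an explicit $T_\text{Au}(n)$ you cannot exhibit the proper complex hypersurfaces disjoint from it that the positivity-of-intersections step requires, so the claim that exactly $n+2$ classes survive is unsupported. (ii) The enumeration $n_{T_2T_3^{\,j}}=\binom{n}{j}$, together with regularity of the chosen $J$ and the orientation argument that makes all contributions count with the same sign (so that the total is $2^n$ rather than $(1\pm1)^n$), is precisely the content of the theorem, and you explicitly defer it (``requires either \dots or \dots'') rather than prove it. Compare Proposition \ref{upsilonHoloDisksIntersectionsProp}, where the analogous binomial lower bound for $\Upsilon_k$ is obtained by an explicit Blaschke-factor manipulation of a given disk: that is the kind of concrete input your step (ii) is missing. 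In short, the proposal is a faithful roadmap to Auroux's theorem, not a proof of it.
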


Going back to $\Upsilon_k$, the evidence to the correspondence is the following proposition.

\begin{prop}\label{upsilonHoloDisksIntersectionsProp}
    If the relative class $\beta_1-k\alpha+n_0\beta_2$ contains a holomorphic disk for some $0\le n_0\le k$, then $\beta_1-k\alpha+n\beta_2$ contains a holomorphic disk for all $0\le n\le k$.
\end{prop}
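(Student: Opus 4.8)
The plan is to exploit the boundary constraint $|z_2|=|z_3|$ on $\Upsilon_k$, which permits redistributing zeros between the second and third components of a holomorphic disk by means of Blaschke factors, and thereby to turn a single disk into disks in every admissible class. First I would record, in the same spirit as the proof of Proposition \ref{possibleClsUpsilonProp}, the intersection numbers with the three coordinate hyperplanes. Using the generators of Proposition \ref{BrendelRelativeHomologyProp} one computes $\alpha\cdot\{z_1=0\}=0,\ \beta_1\cdot\{z_1=0\}=1,\ \beta_2\cdot\{z_1=0\}=0$; $\alpha\cdot\{z_2=0\}=0,\ \beta_1\cdot\{z_2=0\}=0,\ \beta_2\cdot\{z_2=0\}=1$; and $\alpha\cdot\{z_3=0\}=0,\ \beta_1\cdot\{z_3=0\}=k,\ \beta_2\cdot\{z_3=0\}=-1$ (the last sign coming from the anti-holomorphicity of the $z_3$-component of $\beta_2$). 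Hence a disk $u=(u_1,u_2,u_3)$ in class $\beta_1-k\alpha+n\beta_2$ meets $\{z_1=0\}$, $\{z_2=0\}$, $\{z_3=0\}$ with multiplicities $1$, $n$, $k-n$ respectively, and by positivity of intersections these count the interior zeros of $u_1$, $u_2$, $u_3$. All such zeros are interior because $|z_2|=|z_3|=|z|>0$ on $\Upsilon_k$ (as $\Gamma\subset\mathbb{C}^*$ avoids the origin), so $u_2$ and $u_3$ do not vanish on $\partial B^2$.

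Next I would isolate the conditions cutting out $\Upsilon_k$ inside $\mathbb{C}^3$, namely $|w_2|=|w_3|$, $|w_1|^2+k|w_3|^2=1$, and $F(w)=w_1^{-k}w_2w_3$ lying on the boundary loop of the reduced space. The key move is the following. If $u_0=(u_1,u_2,u_3)$ is a holomorphic disk in class $\beta_1-k\alpha+n_0\beta_2$ with $n_0>0$, choose an interior zero $a\in B^2$ of $u_2$ and set $u_0'\coloneqq\left(u_1,\ u_2/b_a,\ u_3\,b_a\right)$, where $b_a(\zeta)=\frac{\zeta-a}{1-\bar a\zeta}$. Since $b_a$ is holomorphic on a neighborhood of $\overline{B^2}$ (its pole $1/\bar a$ lies outside the disk) and $u_2$ vanishes at $a$, the map $u_0'$ is again holomorphic; and since $|b_a|\equiv 1$ on $\partial B^2$, every defining condition survives on the boundary: $|u_2/b_a|=|u_2|=|u_3|=|u_3 b_a|$, the quantity $|u_1|^2+k|u_3 b_a|^2=|u_1|^2+k|u_3|^2=1$ is unchanged, and $F(u_0')=u_1^{-k}(u_2/b_a)(u_3 b_a)=F(u_0)$ is unchanged. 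Thus $u_0'$ has its boundary on $\Upsilon_k$, with one fewer zero in its second component.

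It then remains to identify the class of $u_0'$. Because $\mathbb{C}^3$ is contractible, the long exact sequence of the pair gives an isomorphism $\partial\colon H_2(\mathbb{C}^3,\Upsilon_k)\xrightarrow{\ \sim\ }H_1(\Upsilon_k)\cong\mathbb{Z}^3$, so a class is determined by its boundary. The operation leaves $u_1$ and $F(u_0)$ untouched, whence the $\theta_1$-winding (read off from $\arg u_1$) and the winding of the reduced coordinate around $\Gamma$ (read off from $F$) are unchanged; meanwhile dividing $u_2$ by $b_a$ decreases the $\theta_2$-winding $\arg u_2$ by exactly one, since $b_a|_{\partial B^2}\colon\partial B^2\to S^1$ has degree one. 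Therefore $\partial[u_0']=\partial[u_0]-\partial\beta_2$, i.e.\ $u_0'$ lies in class $\beta_1-k\alpha+(n_0-1)\beta_2$. Symmetrically, if $n_0<k$ then $u_3$ has an interior zero and the inverse move $\left(u_1,\ u_2\,b_a,\ u_3/b_a\right)$ yields a disk in class $\beta_1-k\alpha+(n_0+1)\beta_2$. Starting from the given $n_0$ and applying these moves in the admissible directions (downward when $n_0>0$, upward when $n_0<k$) reaches every $n$ with $0\le n\le k$.

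The hard part is conceptual rather than computational: recognizing that the equality $|z_2|=|z_3|$ is precisely what makes the Blaschke redistribution of zeros simultaneously compatible with \emph{all} of $\Upsilon_k$'s boundary constraints, since a unit-modulus factor perturbs neither the modulus conditions nor $F$. For full rigor I would still double-check the winding bookkeeping — that only the $\theta_2$-component of the boundary class changes — and confirm that the interior zeros of $u_2$ and $u_3$ cannot migrate to $\partial B^2$ (guaranteed by $|z|>0$ on $\Gamma$) and that each $u_0'$ remains a non-constant holomorphic disk not contained in a coordinate hyperplane.
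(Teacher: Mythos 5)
Your proof is correct and follows essentially the same route as the paper's: positivity of intersections with the coordinate hyperplanes locates the interior zeros of $u_2$ and $u_3$, and Blaschke factors transfer them between these two components, with the unit-modulus boundary behavior (together with $\lvert z_2\rvert=\lvert z_3\rvert$ and the invariance of $F$) keeping the boundary on $\Upsilon_k$ and shifting the class by multiples of $\beta_2$. The only substantive difference is that the paper normalizes its Blaschke factors so that $f_w(1)=1$, ensuring the modified disks still pass through the marked point $u(1)=x$ --- irrelevant for the proposition as stated, but needed for the subsequent discussion counting at least $\binom{k}{n}$ disks through a fixed point.
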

\begin{proof}
Fix $x\in\Upsilon_k$ and set $B\coloneqq B^2(\pi)$. Let $u\colon(B,\partial B)\to(\mathbb{C}^3,\Upsilon_k)$ be $J_0$-holomorphic with $u(1)=x$, such that $[u]=\beta_1-k\alpha+n_0\beta_2$. Then each $u_j\colon B\to\mathbb{C}$ is holomorphic, $1\le j\le3$. The intersection $[u]\cdot\{z_2=0\}=n_0$ implies that $u_2$ has exactly $n_0$ zeroes \[\{w_j\}_{j=1}^{n_0}\subset\interior{B}\] in the interior of $B$. For fixed $w\in\interior{B}$, the holomorphic function $f_w(z)\coloneqq\frac{1-\conj{w}}{1-w}\cdot\frac{z-w}{1-\conj{w}z}$ on $B$ fixes $\partial B$, has $f_w(1)=1$, and has only the zero $w$, which is simple.

Then for $1\le r\le n_0$, the following holomorphic disk has boundary on $\Upsilon_k$, \[z\in B\mapsto\left(u_1(z),\frac{u_2(z)}{\prod_{j=1}^rf_{w_j}(z)},u_3(z)\prod_{j=1}^rf_{w_j}(z)\right),\] is in the class $\beta_1-k\alpha+(n_0-r)\beta_2$ and takes the value $x\in\Upsilon_k$ for $z=1$.

Similarly, for zeroes $\{v_j\}_{j=1}^{k-n_0}\subset\interior{B}$ of $u_3$ and $1\le r\le k-n_0$, the holomorphic disk \[z\in B\mapsto\left(u_1(z),u_2(z)\prod_{j=1}^rf_{v_j}(z),\frac{u_3(z)}{\prod_{j=1}^rf_{v_j}(z)}\right)\] is in the class $\beta_1-k\alpha+(n_0+r)\beta_2$ and takes the value $x\in\Upsilon_k$ for $z=1$.
\end{proof}

Now, we explain the role of the above proposition. It seems useless without the a-priori knowledge that holomorphic disks are realized in \emph{some} class $\beta_1-k\alpha+n_0\beta_2$ for some $0\le n_0\le k$. To see this, start by looking at the non-monotone case, that is, assume that $\frac{\pi}{k+1}<a$. Since \[\hbar(\Upsilon_k)=\pi-ka<a=\omega(\alpha),\] assuming that $J_0$ is regular, a pseudo-holomorphic disk must exist in one of the classes $\beta_1-k\alpha+n\beta_2$ (otherwise, by Proposition \ref{possibleClsUpsilonProp}, the pseudo-holomorphic disk with smallest possible area is in the class $\alpha$\footnote{Indeed, by Gromov compactness, it suffices to take the supremum \eqref{hbarSupDefn} over \emph{generic} almost complex structures, since those are dense in $\mathcal{J}_\omega$.} which has $\pi-ka<a=\omega(\alpha)$, in contradiction to $\alpha$'s minimality). But then Proposition \ref{upsilonHoloDisksIntersectionsProp} implies that each class $\beta_1-k\alpha+n\beta_2$ has holomorphic disks. To be precise, at least $\binom{k}{n}$ disks passing through a fixed $x\in\Upsilon_k$, because of the possible choices of zeroes $w_j$ in the above proof. Finally, it's reasonable that these disks persist when taking $a\searrow\frac{\pi}{k+1}$, i.e., passing to the monotone case where pseudo-holomorphic disk count is well-defined.

This adds up to at least $2^k$ disks in the classes $\beta_1-k\alpha+n\beta_2$. Conjecturally, another single disk in the class $\alpha$ gives a total of $2^k+1$, divided to $k+2$ classes, which aligns perfectly with the total count for Auroux's tori.

\subsubsection{Open questions}

In summary, we find that $e(\Upsilon_k)=\hbar(\Upsilon_k)=\frac{\pi}{k+1}$ in the monotone case. In the non-monotone case, we find: \[\pi-ka=\hbar(\Upsilon_k)\le e(\Upsilon_k)\le a.\]

We conjecture that all disks specified by Proposition \ref{possibleClsUpsilonProp} have positive holomorphic disk count in the monotone case (for a suitable choice of spin structure on $\Upsilon_k$). If it does, this gives a new proof for their exoticity, which does not rely on the displacement energy germ as done in \cite{brendel2023local}, but rather on the holomorphic disk count of a fixed class being a symplectic invariant.

Here are the open questions at hand.

\begin{enumerate}
    \item What is $e(\Upsilon_k)$?
    \item Are monotone $\Upsilon_k$ Hamiltonian isotopic to the tori constructed in \cite{Auroux_monTorR6}? Their disk count seems to agree, as discussed above.
\end{enumerate}

For $k=-1,0,1$, the tori $\Upsilon_k$ can be defined similarly. These turn out to be Chekanov's \emph{special tori} in $\mathbb{C}^3$ \cite{chekanovTorus}. The leftmost column below gives the $\Upsilon_k$'s conjectural number of classes with nonzero pseudo-holomorphic disk count.
\[
\begin{tabular}{|c||c|c|c} 
 \hline
 $k$ & -1 & 0 & 1  \\
 \hline
 $\Upsilon_k$ & $T^3_\text{Ch}$ & $S^1\times T^2_\text{Ch}$ & $T^3$ \\ \hline
 $\#\scomp{c}{n_c(\Upsilon_k)\neq0,\,\mu(c)=2}$ & 1 & 2 & 3 \\
 \hline
\end{tabular}\cdots\begin{tabular}{c} 
 \hline
 $n\ge2$ \\
 \hline
 $\Upsilon_n$ \\ \hline
 $n+2$ \\
 \hline
\end{tabular}
\]

\begin{remark*}[$k=3$]
    It can be shown that Chekanov and Schlenk's inversion trick described in the last paragraph of \cite[Section 4]{chekanov2010notes} yields a Lagrangian equal to $\Upsilon_3\subset\mathbb{C}^3$, as a set.
\begin{figure}[h]
    \begin{minipage}{0.45\textwidth}
        \centering
    \begin{tikzpicture}[x=0.5cm,y=0.5cm,z=0.3cm,>=stealth]
% The axes
\draw (xyz cs:x=0) -- (xyz cs:x=9.3);
\draw (xyz cs:y=0) -- (xyz cs:y=9.3);
\draw (xyz cs:z=0) -- (xyz cs:x=-1.3,z=9);

\fill[pattern=horizontal lines, opacity=0.3] (xyz cs:x=0, y=0, z=0) -- (xyz cs:x=-1.3, y=0, z=9)
            -- (xyz cs:x=0, y=9.3) -- cycle;

 % 3D line from (x1, y1, z1) to (x2, y2, z2)
\draw[dashed] (xyz cs:x=0, y=0, z=0) -- (xyz cs:x=3.6, y=4, z=4);

\draw[red, thick] (xyz cs:x=0.9, y=1, z=1) -- (xyz cs:x=2.25, y=2.5, z=2.5);
\end{tikzpicture}
        \caption{$T_\text{Ch}^3\subset\mathbb{C}^3$}
\end{minipage}
\hfill
\begin{minipage}{0.45\textwidth}
        \centering
     \begin{tikzpicture}[x=0.5cm,y=0.5cm,z=0.3cm,>=stealth]
% The axes
\draw (xyz cs:x=0) -- (xyz cs:x=9.3);
\draw (xyz cs:y=0) -- (xyz cs:y=9.3);
\draw (xyz cs:z=0) -- (xyz cs:x=-1.3,z=9);

\fill[pattern=horizontal lines, opacity=0.3] (xyz cs:x=0, y=0, z=0) -- (xyz cs:x=-1.3, y=0, z=9)
            -- (xyz cs:x=0, y=9.3) -- cycle;
            
 % 3D line from (x1, y1, z1) to (x2, y2, z2)
\draw[dashed] (xyz cs:x=9, y=0, z=0) -- (xyz cs:x=0, y=3, z=3);

\fill (xyz cs:x=0, y=3, z=3) circle (1pt);

\draw[red, thick] (xyz cs:x=6, y=1, z=1) -- (xyz cs:x=1.5, y=2.5, z=2.5);
\end{tikzpicture}
    \caption{$\Upsilon_3\subset\mathbb{C}^3$}
\end{minipage}
\caption{Chekanov and Schlenk's inversion trick}
\end{figure}
\end{remark*}

\appendix

\section{The reduced space of Brendel tori}\label{UpsilonAppendix}

The standard moment map $\mu$ on $(\mathbb{C}^3,\omega_\text{std})$, generates the Hamiltonian $T^3$ action of coordinate-wise rotation. For integer $k\ge2$, the moment map \[\nu_k=(\mu_1+k\mu_3,\mu_2-\mu_3)\] on the same space, generates a Hamiltonian action by a $2$-dimensional subtorus: 
\begin{equation}\label{brendelT2actionEqn}
    (\theta_1,\theta_2)\cdot(z_1,z_2,z_3)=\left(e^{2\pi i\theta_1}z_1,e^{2\pi i\theta_2}z_2,e^{2\pi i(k\theta_1-\theta_2)}z_3\right).
\end{equation}
The symplectic reduction by this $T^2$ action on the level sets of $\nu_k$ was described in \cite[Section 4.1]{brendel2023local}. We take a more explicit approach.

Remove all points with non-trivial stabilizer from the level set \[Z_k\coloneqq\nu_k^{-1}(\pi,0)\setminus\mu^{-1}\left(\left\{(\pi,0,0),\left(0,\frac{\pi}{k},\frac{\pi}{k}\right)\right\}\right)\subset\mathbb{C}^3.\] Taking the $T^2$-quotient gives a symplectic reduced space $(\Sigma_k,\omega_k)$, whose symplectic structure is determined by \begin{equation}\label{sympRedEqn}
    i_k^*\omega_\text{std}=p_k^*\omega_k.
\end{equation}
\[
\begin{tikzcd}
    Z_k\ar[r,"i_k", hook] \ar[d,"p_k"] & \mathbb{C}^3 \\ \Sigma_k. &
\end{tikzcd}
\]

The smooth map given in polar coordinates on $\mathbb{C}^3$ \begin{align*}
    q\colon Z_k=\{r_1\neq0\neq r_2=r_3,1=r_1^2+kr_3^2\}&\to\mathbb{C}^* \\ (r_1 e^{2\pi i\theta_1},r_2 e^{2\pi i\theta_2},r_3 e^{2\pi i\theta_3})&\mapsto r_3 e^{2\pi i(k\theta_1-\theta_2+\theta_3)}
\end{align*} is clearly constant on the orbits of the $T^2$ action \eqref{brendelT2actionEqn}.

\begin{prop}
The map $q$ descends to a symplectomorphism \[\Tilde{q}\colon\Sigma_k\to B^*\left(\frac{\pi}{k}\right).\] That is, $q=\Tilde{q}\circ p_k|_Z$.
\end{prop}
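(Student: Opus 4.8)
The plan is to verify directly that the explicitly given map $q$ descends to a well-defined bijection $\tilde q\colon \Sigma_k \to B^*(\pi/k)$ and then check that it is a symplectomorphism by confirming it pulls back the standard area form on $B^*(\pi/k)$ to $\omega_k$ via the defining relation \eqref{sympRedEqn}. Since $q$ is already observed to be $T^2$-invariant, the existence of a continuous descended map $\tilde q$ with $q = \tilde q \circ p_k$ is automatic from the universal property of the quotient; the content is in the three claims that $\tilde q$ is (a) well-defined as a map \emph{into} $B^*(\pi/k)$, i.e.\ its image is exactly the punctured disk of the right radius, (b) a bijection, and (c) area-preserving.

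First I would check the target and injectivity by an explicit coordinate computation. On $Z_k$ we have the constraints $r_2 = r_3$ and $r_1^2 + k r_3^2 = 1$, with $r_1 \neq 0 \neq r_3$, so $r_3^2$ ranges over $(0, 1/k)$. Writing the standard symplectic form on $\mathbb{C}^3$ as $\omega_{\mathrm{std}} = \sum_j \pi\, d(r_j^2)\wedge d(2\theta_j)$ in these polar conventions (matching the normalization $\mu_j = \pi r_j^2$), the image point $q = r_3 e^{2\pi i(k\theta_1 - \theta_2 + \theta_3)}$ has modulus $r_3 \in (0, 1/\sqrt{k})$, so $\pi |q|^2 = \pi r_3^2$ ranges over $(0, \pi/k)$, which is exactly the punctured disk $B^*(\pi/k)$ (recall $B^2(A)$ denotes the disk of area $A$). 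For injectivity of $\tilde q$ I would take two points of $Z_k$ with the same image: equal modulus forces equal $r_3$, hence equal $r_1$ by the constraint, and equal argument forces $k\theta_1 - \theta_2 + \theta_3$ to agree mod $1$; I then exhibit a $T^2$-element \eqref{brendelT2actionEqn} carrying one to the other, using that the action changes the phases by $(\theta_1, \theta_2, k\theta_1 - \theta_2)$ and so can match the three individual arguments once the combination $k\theta_1 - \theta_2 + \theta_3$ is fixed. Surjectivity onto $B^*(\pi/k)$ is the reverse: given a target point of modulus $\rho \in (0,1/\sqrt{k})$ I solve for $r_3 = \rho$, $r_1 = \sqrt{1 - k\rho^2}$, and choose any phases realizing the prescribed argument.

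The main obstacle, and the step deserving genuine care, is the area computation \eqref{sympRedEqn}: showing $\tilde q^* \omega_{\mathrm{std-disk}} = \omega_k$, equivalently $q^*(\tilde q^*\omega_{\mathrm{disk}}) = i_k^* \omega_{\mathrm{std}}$ as forms on $Z_k$. The natural approach is to choose local coordinates on $Z_k$ adapted to the fibration $p_k$ — two angular coordinates along the $T^2$-orbit directions and two transverse coordinates, for instance $r_3$ (equivalently $r_3^2$) and the combined angle $\psi := k\theta_1 - \theta_2 + \theta_3$ that descends to the reduced space. I would compute $i_k^*\omega_{\mathrm{std}} = \pi\,d(r_1^2)\wedge d(2\theta_1) + \pi\,d(r_2^2)\wedge d(2\theta_2) + \pi\,d(r_3^2)\wedge d(2\theta_3)$, substitute the constraints $r_2^2 = r_3^2$ and $d(r_1^2) = -k\,d(r_3^2)$, collect terms, and verify that the horizontal (non-orbit) part is precisely $2\pi\, d(r_3^2)\wedge d\psi$, which is the pullback of the standard area form $2\pi\, d(r_3^2)\wedge d\psi = \pi\, d(r_3^2)\wedge d(2\psi)$ on the disk under $q$ (here $r_3 = |q|$ and $\psi = \tfrac{1}{2\pi}\arg q$). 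The delicate points are bookkeeping the factor conventions in the normalization $\mu_j = \pi r_j^2$ and confirming that all terms proportional to the orbit directions $d\theta_1, d\theta_2$ are annihilated upon pulling back to the reduced space — which is exactly the statement that $\omega_{\mathrm{std}}$ restricted to $Z_k$ is basic for $p_k$, guaranteed a priori by the moment-map condition $\nu_k = (\pi, 0)$ on $Z_k$. Thus the calculation should close cleanly once the constraint relations among the $dr_j^2$ and $d\theta_j$ are substituted.
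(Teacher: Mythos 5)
Your proposal is correct and follows essentially the same route as the paper: the area computation is identical (restrict $\omega_{\mathrm{std}}$ to $Z_k$, use $r_1\,dr_1=-k\,r_3\,dr_3$ and $r_2=r_3$ to collapse everything onto a single basic term in the reduced coordinate), and the well-definedness and target identification are the same modulus bookkeeping. The one place you diverge is invertibility: the paper simply writes down the explicit smooth inverse $g(z)=p_k\bigl(\sqrt{1-k\lvert z\rvert^2},\lvert z\rvert,z\bigr)$, which gives the diffeomorphism in one line, whereas you argue bijectivity by hand via transitivity of the $T^2$-action on the fibres of $q$ plus a surjectivity check. Your route closes, but you should upgrade ``continuous descended map'' to ``smooth'' (the same quotient-manifold universal property the paper invokes) and then observe that a smooth bijection pulling back a nondegenerate $2$-form to a nondegenerate $2$-form is automatically a diffeomorphism --- otherwise you have only a smooth bijection, which is not enough for ``symplectomorphism.'' One caveat on the injectivity sketch: the phase combination actually preserved by the action \eqref{brendelT2actionEqn} is $-k\theta_1+\theta_2+\theta_3$, not $k\theta_1-\theta_2+\theta_3$ as in the paper's displayed formula for $q$ (the paper's own computation, which lands on $-k\,d\theta_1+d\theta_2+d\theta_3$, confirms this); your fibre-transitivity argument goes through for the invariant combination but fails for the one you quote, so fix that sign before matching the three individual arguments.
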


The following commutative diagram contains our main maps:
\begin{center}
\begin{tikzcd}[row sep=large,column sep=large]
    Z_k \arrow[r, hook] \arrow[d,"p_k"'] \arrow[dr, "q"] & \mathbb{C}^3 \\
    \Sigma_k \arrow[r,"\sim"{sloped},"\tilde{q}"'] & B^*\left(\frac{\pi}{k}\right).
\end{tikzcd}
\end{center}

\begin{proof}
The map $\Tilde{q}$ is smooth since $q$ is smooth, see \cite[Proposition 5.20]{Lee03}. It is straightforward to verify that \[g(z)\coloneqq p_k\left(\sqrt{1-k\lvert z\rvert^2},\lvert z\rvert,z\right)\] is a smooth inverse. Note that this map is well defined and smooth for $z\in B^*\left(\frac{\pi}{k}\right)$.

We have proved that $\tilde{q}$ is a diffeomorphism. We now show that $\omega_k=\tilde{q}^*(\rho d\rho\wedge d\varphi)$ where $(\rho,\varphi)$ are polar coordinates on the punctured open ball.

We have $1=r_1^2+kr_3^2$ and $r_2=r_3$ on $Z_k$. Thus, $r_1dr_1+kr_3dr_3=0$ on $Z_k$. Now, using the coordinates $\rho e^{2\pi i\varphi}=\tilde{q}\left(r_1 e^{2\pi i\theta_1},r_2 e^{2\pi i\theta_2},r_3 e^{2\pi i\theta_3}\right)$,
\begin{align*}
i_k^*\omega_\text{std}&=i_k^*(r_1dr_1\wedge d\theta_1+r_2dr_2\wedge d\theta_2+r_3dr_3\wedge d\theta_3) \\
&=i_k^*(r_3dr_3\wedge(-kd\theta_1+d\theta_2+d\theta_3)) \\ &=p_k^*\tilde{q}^*(\rho d\rho\wedge d\varphi).
\end{align*}
This shows that the form $\tilde{q}^*(\rho d\rho\wedge d\varphi)$ on the reduced space $\Sigma_k$ satisfies \eqref{sympRedEqn}.
\end{proof}

For $a\in\big[\frac{\pi}{k+1},\frac{\pi}{k}\big)$, and a contractible loop $\Gamma\subset B^*\left(\frac{\pi}{k}\right)$ bounding area $a$, we have\[\Upsilon_k(a)\coloneqq q^{-1}(\Gamma)\subset\mathbb{C}^3.\]

\paragraph{Acknowledgements}
This note is based on the author's master's thesis at Tel Aviv University. I am thankful for my two supervisors, Leonid Polterovich and Sara Tukachinsky, and our collaborator Jo\'e Brendel, for their contribution and huge support. I am also grateful for the support of my family and workplace.

\phantomsection
\addcontentsline{toc}{section}{References}
%\bibliographystyle{alpha}
%\bibliography{references}
%\input{main.bbl}

\printbibliography

\end{document}